\newtheorem{theorem}{Theorem}[section]
\newtheorem{lemma}[theorem]{Lemma}
\newtheorem{prop}[theorem]{Proposition}
\newtheorem*{thmA}{Theorem A}
\newtheorem*{thmB}{Theorem B}
\theoremstyle{definition}
\newtheorem{hyp}[theorem]{Hypothesis}
\theoremstyle{remark}
\numberwithin{equation}{section}
\newcommand{\bC}{{\mathbf{C}}}
\newcommand{\bF}{{\mathbf{F}}}
\newcommand{\Aut}{{\operatorname{Aut}}}
\newcommand{\Stab}{{\operatorname{Stab}}}
\newcommand{\Irr}{{\operatorname{Irr}}}
\newcommand{\Syl}{{\operatorname{Syl}}}
\newcommand{\Sym}{{\operatorname{Sym}}}
\newcommand{\dl}{{\operatorname{dl}}}
\newcommand{\cl}{{\operatorname{cl}}}
\newcommand{\Out}{{\operatorname{Out}}}
\newcommand{\Alt}{{\operatorname{Alt}}}
\let\nor=\triangleleft
\begin{document}

\title{On $p$-parts of character degrees and conjugacy class sizes of finite groups}

\author{Yong Yang and Guohua Qian}
\address{Department of Mathematics, Texas State University, 601 University Drive, San Marcos, TX 78666, USA.}
\address{Key Laboratory of Group and Graph Theories and Applications, Chongqing University of Arts and Sciences, Chongqing, China.}
\address{Department of Mathematics, Changshu Institute of Technology, Changshu, JiangSu 215500, Peoples Republic of China.}
\makeatletter
\email{yang@txstate.edu, ghqian@cslg.cn}
\makeatother

\subjclass[2000]{20C20, 20C15, 20D10}
\date{}



\begin{abstract}
Let $G$ be a finite group and $\Irr(G)$ the set of irreducible complex characters of $G$. Let $e_p(G)$ be the largest integer such that $p^{e_p(G)}$ divides $\chi(1)$ for some $\chi \in \Irr(G)$. We show that $|G:\bF(G)|_p \leq p^{k e_p(G)}$ for a constant $k$. This settles a conjecture of A. Moret\'o ~\cite[Conjecture 4]{Moret1}. 


We also study the related problems of the $p$-parts of conjugacy class sizes of finite groups.
\end{abstract}

\maketitle
\maketitle
\Large
\section{Introduction} \label{sec:introduction8}

Let $G$ be a finite group and $P$ be a Sylow $p$-subgroup of $G$, it is reasonable to expect that the degrees of irreducible characters of $G$ somehow restrict those of $P$.  The Ito-Michler theorem proves that each ordinary irreducible character degree is coprime to $p$ if and only if $G$ has a normal abelian Sylow $p$-subgroup. Of course, this implies that $|G:\bF(G)|_p=1$.



Let $\Irr(G)$ be the set of irreducible complex characters of $G$, and $e_p(G)$ be the largest integer such that $p^{e_p(G)}$ divides $\chi(1)$ for some $\chi \in \Irr(G)$. Isaacs ~\cite{Isaacs} showed that if $G$ is solvable, then the derived length of a Sylow $p$-subgroup of $G$ is bounded above by $2e_p(G) + 1$.

Let $b(P)$ denote the largest degree of an irreducible character of $P$. \cite[Conjecture 4]{Moret1} suggested that $\log b(P)$ is bounded as a function of $e_p(G)$. Moret\'o and Wolf ~\cite{MOWOLF} have proven this for $G$ solvable and even something a bit stronger, namely the logarithm to the base of $p$ of the $p$-part of $|G: \bF(G)|$ is bounded in terms of $e_p(G)$. In fact, they showed that $|G:\bF(G)|_p \leq p^{19 e_p(G)}$ for any solvable groups ~\cite[Corollary B (i)]{MOWOLF}, and $|G: \bF(G)|_p \leq p^{2 e_p(G)}$ for odd order groups ~\cite[Corollary B (iii)]{MOWOLF}. 

Recently, Lewis, Navarro and Wolf ~\cite{LNW} showed the following. Assume that $G$ is solvable and $e_p(G)=1$, then $|G:\bF(G)|_p \leq p^2$.

In this paper, we show that for arbitrary group, $|G:\bF(G)|_p \leq p^{k e_p(G)}$ for some constant $k$. This settles \cite[Conjecture 4]{Moret1}.

\begin{thmA}
Let $G$ be a finite group and $e_p(G)$ be the largest integer such that $p^{e_p(G)}$ divides $\chi(1)$ for some $\chi \in \Irr(G)$.
\begin{enumerate}
\item If $p \geq 5$, then $\log_p |G: \bF(G)|_p\leq 7.5 e_p(G)$.
\item If $p=3$, then $\log_p |G: \bF(G)|_p\leq 23 e_p(G)$.
\item If $p=2$, then $\log_p |G: \bF(G)|_p\leq 20 e_p(G)$.
\end{enumerate}
\end{thmA}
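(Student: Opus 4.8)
The plan is to reduce the general case to the solvable case (already handled by Moret\'o--Wolf with an explicit constant $19$, and by the $2e_p$ bound for odd order) plus a controlled analysis of the non-solvable composition factors. I would first pass to $G/\bF(G)$ and observe that bounding $|G:\bF(G)|_p$ is equivalent to bounding $|\bar G|_p$ where $\bar G = G/\bF(G)$ acts faithfully (by Hall--Higman type arguments, or rather using that $\bF(G/\bF(G))$ is the generalized Fitting quotient) on the chief factors inside $\bF(G)$. So the key inequality to establish is of the form: if $G$ acts faithfully and completely reducibly on a module, then $\log_p|G|_p$ is bounded linearly in $e_p(G)$. Since $e_p$ is monotone under quotients and subgroups in the relevant sense (it does not increase when we pass to sections, as character degrees of sections divide those of $G$ in the appropriate cases — care is needed here, but degrees of normal subgroups and quotients behave well), the induction can be set up on $|G|$.

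The heart of the argument is the non-abelian composition factors. For a group $G$ with $\bF(G)=1$ (or more precisely, working with $\bF^*(G) = \bF(G)E(G)$), write the socle as a product of non-abelian simple groups $S_1 \times \cdots \times S_t$ permuted by $G$. The strategy is: (i) bound the $p$-part of $\Out(S)$ and of the permutation action (the ``diagonal'' and ``field/graph'' contributions) against $e_p$ of the simple group $S$; (ii) use known results on character degrees of simple groups of Lie type and alternating groups — in particular that every simple group $S$ has an irreducible character of degree divisible by a large $p$-power, roughly $|S|_p$ up to a bounded power — so that $e_p(S)$ itself already controls $\log_p|S|_p$ linearly; (iii) combine over the $t$ factors, using that a product of $t$ isomorphic simple groups has a character of degree $\geq b(S)^t$ so $e_p$ grows additively. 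For the almost simple and product-of-simples pieces one invokes the classification; the bounds like $|S : 1|_p \le p^{c\, e_p(S)}$ with small $c$ come from explicit knowledge of degrees (Steinberg character has degree $|S|_p$, so $e_p(S) \ge \log_p|S|_p$ for groups of Lie type in the defining characteristic; for cross-characteristic $p$ and for $\mathfrak A_n$ one uses that there are characters of degree divisible by $|S|_p / p^{O(1)}$).

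The remaining solvable-type contribution — the action of $G$ on the abelian chief factors modulo the layer $E(G)$ — is where one plugs in \cite[Corollary B]{MOWOLF} directly, or re-derives the linear bound via the orbit/stabilizer reduction to primitive linear groups and the Gluck--Wolf / Espuelas-type estimates on orbit sizes of coprime and non-coprime actions, which give characters of the module group with degree a controlled power of $p$. The three cases $p\ge 5$, $p=3$, $p=2$ are separated precisely because the constants in these ingredients (the solvable bound, the $p=2$ and $p=3$ exceptions in the classification of primitive linear groups and in $\Out$ of simple groups, and the small-characteristic pathologies) differ; for $p\ge 5$ one gets the clean $7.5$, while $p=2,3$ pick up the larger constants $20$ and $23$ from these exceptional configurations.

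The main obstacle I anticipate is controlling the interaction between the solvable radical acting on $\bF(G)$ and the semisimple part $E(G)$ simultaneously — i.e., showing that a character of $G$ of degree divisible by $p^{e}$ can be built that sees \emph{both} the wreath-product/diagonal structure on the simple factors and the linear action on the abelian chief factors, without the two contributions cancelling or without needing to multiply constants. Concretely, one wants an irreducible character of $G$ whose degree is divisible by (a bounded power below) the product of the $p$-parts coming from each layer; this requires a careful Clifford-theoretic argument going up the normal series $1 \le \bF(G) \le \bF^*(G) \le G$, extending or inducing characters at each stage while tracking $p$-parts, and is the step where the explicit numerical constants are won or lost.
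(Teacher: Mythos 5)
Your proposal stalls exactly where the paper's argument is in fact easy, and the route you sketch cannot produce the stated constants. The ``main obstacle'' you flag --- building a \emph{single} irreducible character of $G$ whose degree simultaneously picks up the $p$-parts of the solvable layer, the linear action on abelian chief factors, and the wreath/diagonal structure on the simple factors --- is not how the bound is obtained, and you do not resolve it. The point you are missing is that since \emph{every} $\chi\in\Irr(G)$ satisfies $\chi(1)_p\le p^{a}$ (with $a=e_p(G)$), one may use a \emph{different} irreducible character of $G$ for each layer of a normal series and simply add the exponent bounds. Concretely, the paper sets $T$ equal to the maximal normal $p$-solvable subgroup, notes $\bF(T)=\bF(G)$, and bounds $|T:\bF(G)|_p$ and $|G/T|_p$ separately; inside $T$ it further splits off $O_{\infty}(T)$ and, for each of the remaining layers (a quotient of $\Out$ of the semisimple socle, a chunk of $\bF$ of that quotient, etc.), it produces via an orbit theorem (Proposition 2.4/2.5, resting on Dolfi's partition lemma for permutation groups not containing $\Alt(m)$, Gow's regular orbit theorem, and the Moret\'o--Tiep regular orbit result for coprime actions on $\Irr(S)$) a character of $V=F^*$ with small Sylow $p$-stabilizer, whence by Clifford theory a character of $G$ whose degree is divisible by that layer's $p$-part. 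Each such character contributes at most $a$ to the exponent, and the constants $5.5,\,7.5$, etc.\ are literally sums $2.5+1+1+1$, $5.5+2$, and so on. Your envisioned single-character construction is both unnecessary and genuinely hard, so as written the proof is incomplete at its central step.

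Two further concrete problems. First, constants: you propose to import the Moret\'o--Wolf solvable bound $19e_p$ for the solvable contribution, but $19+(\text{anything})>7.5$, so the claimed $7.5$ for $p\ge 5$ is unreachable by your route; the paper instead uses the much sharper $p$-solvable input $|T:\bF(G)|_p\le p^{2.5a}$ from \cite{YY5} (and the corrected $t\le 15$ orbit theorem of \cite{YY1} for $p=2,3$). Second, your simple-group ingredient ``a character of degree divisible by $|S|_p/p^{O(1)}$'' is not available for $p=2,3$: alternating (and some sporadic) groups need not have $p$-blocks of defect zero, and no additive-defect bound of that form holds; what is true, and what the paper uses (Gagola's lemma together with Qian's induction lemma, which also handles the $p$-part of the permutation action on the simple factors that you treat only in passing), is the multiplicative statement that some $\chi\in\Irr(S)$ has $\chi(1)_p^{k}>|\Aut(S)|_p$ with $k=2,3,5$ for $p\ge5,\,p=3,\,p=2$ --- which is precisely why the final constants jump from $7.5$ to $23$ and $20$ at the small primes. (Your opening reduction, that $G/\bF(G)$ acts faithfully on the chief factors inside $\bF(G)$, is also false for nonsolvable $G$ since $\bC_G(\bF(G))\not\le\bF(G)$ in general; one must work with $F^*$ of the successive quotients, as the paper does.)
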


\bigskip

Let $G$ be a finite group. Let $p$ be a prime and we denote the $p$-part of the group order $|G|_p=p^n$. An irreducible ordinary character of $G$ is called $p$-defect $0$ if and only if its degree is divisible by $p^n$. By ~\cite[Theorem 4.18]{WF}, $G$ has a character of $p$-defect $0$ if and only if $G$ has a $p$-block of defect $0$.

It is an interesting problem to give necessary and sufficient conditions for the existence of $p$-blocks of defect zero. If a finite group $G$ has a character of $p$-defect $0$, then $O_p(G)=1$ ~\cite[Corollary 6.9]{WF}. Unfortunately, the converse is not true. Zhang ~\cite{Zhang} and Fukushima ~\cite{Hiroshi1, Hiroshi2} provided various sufficient conditions where a finite group $G$ has a block of defect zero.

Although the block of defect zero may not exist in general, one could try to find the smallest defect $d(B)$ of a block $B$ of $G$. It is not true in general that there exists a block $B$ with $d(B) \leq \lfloor \frac n 2 \rfloor$, as $G = A_7 (p = 2)$ shows us. However, the counterexamples were only found for $p=2$ and $p=3$. By work of Michler and Willems ~\cite{Michler,Willems} every simple group except possibly the alternating group has a block of defect zero for $p \geq 5$. The alternating group case was settled by Granville and Ono in ~\cite{GranvilleOno} using number theory.

For solvable groups, one of the results along this line is given by ~\cite[Theorem A]{AENA2}. In ~\cite{AENA2}, Espuelas and Navarro bounded the smallest defect $d(B)$ of a block $B$ of $G$ using the $p$-part of $|G|$. Using an orbit theorem ~\cite[Theorem 3.1]{AE1} of solvable linear groups of odd order, they showed the following result. Let $G$ be a (solvable) group of odd order such that $O_p(G) = 1$ and $|G|_p = p^n$, then $G$ contains a $p$-block $B$ such that $d(B) \leq \lfloor \frac n 2 \rfloor$. The bound is best possible, as shown by an example in ~\cite{AENA2}.

Based on these, the following question raised by Espuelas and Navarro in ~\cite{AENA2} seems to be natural. If $G$ is a finite group with $O_p(G)=1$ for some prime $p \geq 5$, and denote $|G|_p=p^n$, does $G$ contain a block of defect less then or equal to $\lfloor \frac n 2 \rfloor$?


Not much has been done on this problem. In a recent paper ~\cite[Theorem B]{YY5}, the author obtained a partial result toward the previous question by showing the following. Let $G$ be a finite solvable group, let $p$ be a prime such that $p \geq 5$ and $O_p(G)=1$, and we denote $|G|_p=p^n$. Then $G$ contains a $p$-block $B$ such that $d(B) \leq \lfloor \frac {3n} {5} \rfloor $.

In this paper, we study a related result about the size of defect group of arbitrary finite groups, and obtain an upper bound.

\begin{thmB}
Let $p$ be a prime, and $G$ be a finite group such that $O_p(G)=1$. We denote the $p$ part of the group order $|G|_p=p^n$. Then $G$ contains a $p$-block $B$ such that $d(B) \leq \lfloor \alpha n \rfloor $ for some constant $\alpha$.
\end{thmB}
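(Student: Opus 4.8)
# Proof Proposal for Theorem B

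The plan is to reduce Theorem B to Theorem A. The key observation is that the defect $d(B)$ of a block $B$ of $G$ is the $p$-part exponent of the order of a defect group $D$ of $B$, and that $G$ has a block $B$ with defect group $D$ if and only if $N_G(D)$ has a block with defect group $D$ (Brauer's first main theorem); more usefully, one may build blocks with small defect group by working inside $p$-local subgroups. First I would recall the standard fact that if $Q$ is a $p$-subgroup of $G$ and $\bar G = N_G(Q)/Q$ has a character of $p$-defect zero, then $G$ has a block with defect group of order exactly $|Q|$, hence defect $\log_p|Q|$. So the strategy is to locate a $p$-subgroup $Q$ of $G$, not too large, such that $N_G(Q)/Q$ possesses a character of $p$-defect zero — equivalently a block of defect zero.

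The natural choice is to take $Q$ to be a Sylow $p$-subgroup $P$ of $\bF(G)$ (recall $O_p(G) = 1$, so $P$ is a proper, possibly trivial, $p$-group that is normal in $\bF(G)$ but one must instead work with $O_{p'}(G)$-related Fitting-type subgroups). Let me sketch it more carefully. Set $N = \bF(G)$; since $O_p(G) = 1$, the Sylow $p$-subgroup of $N$ is trivial, so $N$ is a $p'$-group contained in $C_G(O_{p'}(G))$-type configurations, and $G/N$ acts on the abelian-by-nilpotent structure. The relevant quantity $|G:\bF(G)|_p = p^m$ is, by Theorem A, bounded by $p^{k e_p(G)}$ where $k \in \{7.5, 23, 20\}$ depending on $p$, and since $e_p(G) \le n$ we get $m \le kn$. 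Meanwhile $|G|_p = p^n$ and $|\bF(G)|_p = 1$, so in fact $m = n$ and Theorem A gives nothing new directly — the real content must come from choosing $Q$ with $|Q| = p^{n - d}$ for the desired defect $d$, so that $|N_G(Q)/Q|_p = p^d$, and then producing a defect-zero block in the quotient.

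Here is the mechanism I would use: by a theorem on the existence of blocks of defect zero for $p'$-cores, or more concretely by induction on $|G|$, pick a maximal $p$-subgroup $Q$ such that $C := C_G(Q)O_{p'}(N_G(Q))/\!\cdots$ — rather, apply the result of Michler--Willems--Granville--Ono that every finite simple group has a block of defect zero for $p \geq 5$ (and handle $p = 2, 3$ with the known exceptions absorbed into the constant $\alpha$), combined with a Fong-type reduction to quasi-simple and $p$-solvable layers. Concretely: let $E = E(G)\bF(G)$ be the generalized Fitting subgroup region; the components of $G$ each contribute a defect-zero block (for $p \geq 5$) after passing to $N_G(Q)/Q$ for suitable $Q$ built from their Sylow $p$-subgroups, and $G/E$ is small relative to $e_p(G)$ by the outer-automorphism bounds already used in proving Theorem A. Assembling, one obtains a block of $G$ whose defect is at most the $p$-part of the order of $G/E$ plus a bounded contribution, and that quantity is $\le \alpha n$ by the same $p$-solvable estimates (Moretó--Wolf, and Theorem A) that bound $|G:\bF(G)|_p$.

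The main obstacle will be the assembly step: blocks of defect zero do not behave as simply under extensions and central products as characters of $p'$-degree do, so passing from defect-zero blocks of the components and of $p$-solvable sections to a single block of $G$ of controlled defect requires care with Clifford theory of blocks (covering/covered blocks, the Fong--Reynolds correspondence) and with the action of $G$ permuting the components. In particular, when the components are permuted transitively one must pass to a wreath-product-type situation and verify that a defect-zero block survives, which is where an orbit-theorem flavored argument (as in \cite{AENA2}) or an explicit count is needed. I expect the constant $\alpha$ to be far from optimal, dominated by the $p = 3$ bound $23$ from Theorem A, and I would not try to optimize it.
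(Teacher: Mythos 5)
Your plan does not close, and the step where you discard the direct route is exactly where the paper's proof lives. Since $O_p(G)=1$, the Fitting subgroup $\bF(G)$ is a $p'$-group, so $|G:\bF(G)|_p=|G|_p=p^n$; Theorem A (in the sharper form of Theorem~\ref{chardegreegeneralbound} and its proof) then reads $n\le \beta\, e_p(G)$ with $\beta=7.5,\ 23,\ 20$ for $p\ge 5,\ p=3,\ p=2$, i.e.\ $e_p(G)\ge n/\beta$. Thus there is $\chi\in\Irr(G)$ whose degree is divisible by $p^{\lceil n/\beta\rceil}$, and the paper finishes in two lines by passing to the block $B$ containing $\chi$ and invoking the relation between $d(B)$ and the $p$-parts of the degrees in $B$, getting $d(B)\le n-\lceil n/\beta\rceil\le\lfloor\tfrac{\beta-1}{\beta}n\rfloor$. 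Your remark that ``$m=n$ and Theorem A gives nothing new directly'' is backwards: the equality $m=n$ converts Theorem A from an upper bound on $|G:\bF(G)|_p$ into a lower bound on $e_p(G)$, which is precisely the input needed; no local block theory, Fong reduction, or treatment of components is required. (One caution if you compare carefully: with the standard convention $d(B)=n-\min_{\psi\in\Irr(B)}\chi(1)\mapsto v_p(\psi(1))$, a single character of large degree $p$-part bounds its \emph{own} defect, not automatically $d(B)$, because of heights; a fully careful version of this last step extracts the small-defect block via the Fong--Reynolds correspondence from the small stabilizers over normal $p'$-subgroups produced in the proofs of Proposition~\ref{prop4} and Theorem~\ref{chardegreepsolvablebound}, rather than from an arbitrary character of large degree.)

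The alternative route you sketch---find a $p$-subgroup $Q$ such that $N_G(Q)/Q$ has a block of defect zero, then use domination and Brauer's first main theorem---is the Espuelas--Navarro/Zhang style of argument, and it is genuinely different from the paper's; but as written it is a plan, not a proof. The two essential steps are missing: (i) the actual construction of a suitable $Q$ of order at most $p^{\lfloor\alpha n\rfloor}$, and (ii) the ``assembly'' of defect-zero blocks of the components through the permutation action and the $p$-solvable layers (covering blocks, Fong--Reynolds, wreath situations), which you yourself flag as the main obstacle. There is also an internal inconsistency in your mechanism: a block of $G$ with defect group $Q$ has defect $\log_p|Q|$, so choosing $|Q|=p^{n-d}$ would produce a block of defect $n-d$, not $d$, and $|N_G(Q)/Q|_p$ need not equal $p^{d}$ since $N_G(Q)$ need not contain a Sylow $p$-subgroup of $G$. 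So the proposal, taken on its own, does not establish Theorem B; the paper's argument is a short reduction to Theorem A that you explicitly considered and set aside.
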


\section{An orbit theorem} \label{sec:Orbits}
Theorem A and Theorem B are proved using an orbit theorem of $p$-solvable groups.

We first fix some notation:
\begin{enumerate}


\item We use $\bF(G)$ to denote the Fitting subgroup of $G$. Let $\bF_0(G) \leq \bF_1(G) \leq \bF_2(G) \leq \cdots \leq \bF_n(G)=G$ denote the ascending Fitting series, i.e. $\bF_0(G)=1$, $\bF_1(G)=\bF(G)$ and $\bF_{i+1}(G)/\bF_i(G)=\bF(G/\bF_i(G))$. $\bF_i(G)$ is the $i$th ascending Fitting subgroup of $G$.

\item We use $F^*(G)$ to denote the generalized Fitting subgroup of $G$.

\item We use $O_{\infty}(G)$ to denote the maximal normal solvable subgroup of $G$.

\item We use $\Irr(G)$ to denote the set of all the irreducible characters of the group $G$.

\item We use $\cl(G)$ to denote the set of all the conjugacy classes of the group $G$.

\item Let $G$ be a finite group, we denote $cd(G)=\{\chi(1)\ | \ \chi \in \Irr(G) \}$.

\item Let $G$ be a finite solvable group, we denote $\dl(G)$ to be the derived length of $G$.

\end{enumerate}

\bigskip

We need the following results about simple groups.

\begin{lemma}\label{simplecoprime}
Let $A$ act faithfully and coprimely on a nonabelian simple group $S$. Then $A$ has at least $2$ regular orbits on $\Irr(S)$.
\end{lemma}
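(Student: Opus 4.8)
The plan is to reduce this to a known orbit-counting result and then invoke the classification of finite simple groups. First I would recall the general principle that when $A$ acts faithfully and coprimely on a group $S$, the number of regular orbits of $A$ on $\Irr(S)$ can be bounded below provided $|\Irr(S)|$ is large relative to $|A|$ and relative to the number of elements of $\Irr(S)$ fixed by nontrivial subgroups. Since the action is coprime, Glauberman's lemma and Brauer's permutation lemma give a bijection between $A$-orbits on $\Irr(S)$ and $A$-orbits on $\cl(S)$, and more usefully the number of $A$-fixed irreducible characters equals the number of $A$-fixed classes. So the strategy is: count, for each subgroup $1 \ne B \le A$, the characters in $\Irr(S)$ fixed by $B$, and show that the union of these ``bad'' sets, together with the characters lying in small $A$-orbits, cannot cover all of $\Irr(S)$ twice over.

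The key steps, in order, would be: (1) Observe that $A$ embeds in $\Out(S)$, since the action is coprime to $|S|$ and $S$ is simple nonabelian (a coprime automorphism group acting faithfully on $S$ that contained inner automorphisms would force $|S|$ and $|A|$ to share a prime). Hence $|A|$ is quite small — bounded by $|\Out(S)|$, which for simple groups is either tiny (at most $4$, or $S_3$ or so, for sporadic and alternating groups) or of the shape $d \cdot f \cdot g$ for groups of Lie type, roughly $\log$-sized compared to $|S|$. (2) For each nontrivial $B \le A$, bound $|\Irr_B(S)| = |C_{\Irr(S)}(B)|$. By the coprime correspondence this equals the number of $B$-fixed conjugacy classes, which is at most $k(C_S(B))$ where $C_S(B)$ is the fixed-point subgroup; for semisimple or field automorphisms $C_S(B)$ is a much smaller group (a group of Lie type of smaller rank or over a smaller field), so $k(C_S(B))$ is $o(|\Irr(S)|)$. (3) Combine: the number of characters lying in a non-regular $A$-orbit is at most $\sum_{1 \ne B \le A} |\Irr_B(S)|$, and one needs $|\Irr(S)|$ minus this sum to exceed $2|A|$, so that after discarding characters in non-regular orbits there remain at least $2|A|$ characters, i.e.\ at least two full regular orbits. (4) Handle the finitely many small cases — $\Alt_n$ for small $n$, the sporadic groups, and the very small Lie-type groups like $\PSL(2,q)$ for small $q$ — by direct inspection of character tables, since the asymptotic counting argument needs $|S|$ large.

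The main obstacle will be step (4) together with the tension in step (3): the inequality $k(S) - \sum_{1 \ne B} k(C_S(B)) > 2|A|$ is comfortable when $S$ is large but genuinely tight for the small groups of Lie type and for small alternating groups, where both $\Out(S)$ is comparatively large (graph, field, and diagonal automorphisms can all be present, e.g.\ $\PSL(3,4)$ or $\PSL(2,q)$) and $k(S)$ is not yet overwhelming. These cases require careful case analysis using explicit generic character-degree / class-number formulas for groups of Lie type (Deriziotis--Michler-type counts for semisimple classes) and, for the genuinely small ones, the ATLAS. A secondary subtlety is that ``faithful and coprime'' must be used to rule out $p \mid |S|$ for the relevant automorphism primes, which restricts which outer automorphisms can actually occur for a given $S$ — exploiting this typically shrinks $A$ enough to make the count go through, and I would organize the Lie-type analysis around which primes can divide $|A|$ given that $\gcd(|A|,|S|)=1$.
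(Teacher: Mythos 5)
Be aware that the paper does not prove this lemma at all: its ``proof'' is a citation to Moret\'o--Tiep \cite[Proposition 2.6]{MOTIEP}, so there is no in-paper argument to match. Your outline is the natural fixed-point--counting strategy and is broadly the kind of CFSG-based argument that underlies the cited result: the reduction $A\hookrightarrow\Out(S)$ is correct (coprimality gives $A\cap\Inn(S)=1$), and the bound $|\Irr_B(S)|\le k(C_S(B))$ for $1\ne B\le A$ is legitimate under coprime action (Brauer's permutation lemma plus the fact that a $B$-invariant class of $S$ contains $B$-fixed elements, any two of which are $C_S(B)$-conjugate). Note also that you only need strictly more than $|A|$ characters in regular orbits, since that count is a multiple of $|A|$.

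The main criticism is twofold. First, as written this is a program rather than a proof: the decisive content --- verifying $k(S)-\sum_{1\ne B\le A}k(C_S(B))>|A|$ across the families of Lie type and inspecting the small cases --- is entirely deferred. Second, your assessment of where the difficulty lies is misplaced, because you treat the coprimality restriction as a ``secondary subtlety'' when it is really the key reduction. Diagonal outer automorphisms have orders all of whose prime divisors divide $|S|$, and graph automorphisms have order $2$ or $3$, while $2$ always divides $|S|$ and $3$ divides $|S|$ except for the Suzuki groups (which have no graph automorphisms); hence a coprime $A$ must embed into the cyclic group of field automorphisms. For alternating and sporadic groups $\Out(S)$ is a $\{2,3\}$-group, so $A=1$ and the statement is trivial (using $|\Irr(S)|\ge 4$). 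In particular your worried examples such as $\PSL(3,4)$ simply do not occur, and for a field automorphism $B$ of prime order $r$ one has $C_S(B)$ of the same type over the subfield, so $k(C_S(B))$ is roughly $k(S)^{1/r}$ and the inequality is comfortable rather than tight; the genuinely small cases ($\PSL(2,q)$ over tiny fields, Suzuki groups with $3\mid|A|$, etc.) still need explicit checking, which your sketch does not carry out.
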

\begin{proof}
This is ~\cite[Proposition 2.6]{MOTIEP}.
\end{proof}


\begin{lemma}\label{chardegsimple}
Let $G$ be a non-abelian finite simple group, then $|cd(G)| \geq 4$.
\end{lemma}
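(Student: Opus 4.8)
The plan is to prove this via the classification of finite simple groups, exhibiting four distinct irreducible degrees in each family. The bound is best possible — for instance $cd(\fA_5)=\{1,3,4,5\}$, and more generally $|cd(\PSL(2,2^f))|=4$ for every $f\geq 2$ — so there is no hope of a cheap uniform argument: the task genuinely reduces to producing four degrees in every case.

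For the alternating groups $\fA_n$ with $n\geq 5$ I would settle $n\in\{5,6,7\}$ by direct computation, and for $n\geq 8$ use that the hook partitions $(n-1,1)$, $(n-2,1^2)$, $(n-3,1^3)$ of $n$ are not self-conjugate in that range, so that the corresponding irreducible characters of $\fS_n$ remain irreducible on restriction to $\fA_n$; together with the trivial character this yields the degrees $1,\ n-1,\ \binom{n-1}{2},\ \binom{n-1}{3}$, and an elementary check shows these are pairwise distinct once $n\geq 8$. For the $26$ sporadic groups and the Tits group the assertion is a finite verification against the ATLAS, every such group having considerably more than four character degrees.

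The bulk of the work is the groups of Lie type. For $\PSL(2,q)$ one reads the bound off the classical character table: $\PSL(2,4)\cong\PSL(2,5)\cong\fA_5$ has exactly the four degrees $1,3,4,5$; for even $q\geq 8$ the four distinct degrees $1,\ q-1,\ q,\ q+1$ occur; and for odd $q\geq 7$ the numbers $1,\ q-1,\ q,\ q+1$ are four distinct character degrees (the principal series of degree $q+1$ being nonempty precisely for $q\neq 5$ in this range). For a group $G$ of Lie type of rank at least $2$ one always has the trivial character of degree $1$, the Steinberg character of degree $q^{N}$ with $N$ the number of positive roots, and — since a Weyl group of rank $\geq 2$ already carries enough nontrivial irreducible characters — at least one further distinct unipotent degree from Lusztig's parametrization, together with a non-unipotent (semisimple) character whose degree is visibly different; the finitely many small $q$ for which coincidences could occur are checked individually. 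The remaining rank-one twisted groups $\PSU(3,q)=\tA_2(q)$, the Suzuki groups ${}^2\!B_2(q)$, and the Ree groups ${}^2\!G_2(q)$ are handled from their fully known lists of character degrees.

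The main obstacle is precisely this last family. Unlike the higher-rank situation, $\PSL(2,q)$ has only two unipotent characters (the trivial one and Steinberg), so to produce the third and fourth degrees one genuinely needs the discrete-series and principal-series characters, hence the full character table; and the low-rank twisted groups likewise force an appeal to explicit degree lists rather than to a generic-degree count. In practice I would not reprove all of this from scratch here, but would cite the place in the literature where $|cd(S)|\geq 4$ is recorded for every nonabelian finite simple group $S$, having indicated above why the inequality holds.
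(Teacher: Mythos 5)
Your argument is correct in outline, but it takes a much heavier route than the paper, which disposes of the lemma in one line: by Isaacs' Theorem 12.15 (the Isaacs--Passman result that a finite group $G$ with $|cd(G)|\leq 3$ is solvable), a nonabelian simple group, being nonsolvable, must have at least four character degrees. That theorem is an elementary piece of character theory, independent of the classification of finite simple groups, so your opening claim that the sharpness of the bound (e.g.\ $cd(\fA_5)=\{1,3,4,5\}$) leaves ``no hope of a cheap uniform argument'' is mistaken: sharpness only rules out improving the constant, not a uniform proof. Your CFSG-based case analysis would work --- the hook-partition argument for $\fA_n$, the ATLAS check for sporadics, and the explicit degree lists for $\PSL(2,q)$ and the rank-one twisted groups are all sound, and the rank $\geq 2$ Lie-type case, though sketched loosely (``coincidences for small $q$ checked individually,'' ultimately deferring to the literature), could be completed --- but it buys nothing here beyond what the classification-free citation already gives, at the cost of a long verification whose hardest low-rank cases you yourself end up outsourcing. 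If you do want a self-contained treatment, the efficient move is to prove or cite the Isaacs--Passman theorem rather than to enumerate simple groups.
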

\begin{proof}
This follows from ~\cite[Theorem 12.15]{Isaacs/book}.
\end{proof}

\begin{lemma}\label{simpleouter}
Let $G$ be a non-abelian finite simple group, then $C=\Out(G)$ has a normal series of the form $A \nor B \nor C$ where $A$ is abelian, $B/A$ is cyclic and $C/B \cong 1, S_2$ or $S_3$.
\end{lemma}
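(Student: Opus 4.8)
The plan is to invoke the classification of finite simple groups and treat the three families separately, exhibiting the series $A \nor B \nor C$ explicitly in each case. Two of the families are immediate. If $G = A_n$ is an alternating group with $n \geq 5$, then $\Out(G) \cong S_2$, except that $\Out(A_6) \cong S_2 \times S_2$; in the former case take $A = B = 1$, so that $C/B \cong S_2$, and in the latter take $A = 1$ and $B$ of order $2$, so that $B/A$ is cyclic and $C/B \cong S_2$. If $G$ is one of the $26$ sporadic groups (or the Tits group), then $\Out(G)$ is trivial or of order $2$ by the \emph{Atlas of Finite Groups}, and $A = B = 1$ works.

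The substantive case is when $G$ is a simple group of Lie type, and here I would quote the standard description of $\Aut(G)$ due to Steinberg, in the form given by Gorenstein, Lyons and Solomon (\emph{The Classification of the Finite Simple Groups, Number 3}, Theorem 2.5.12; see also Carter's book). Write $\mathrm{Outdiag}(G)$ for the group of diagonal outer automorphisms, $\Phi$ for the cyclic group generated by a field automorphism (or, in characteristic $2$ for types $B_2$, $F_4$ and in characteristic $3$ for type $G_2$, by a graph--field automorphism), and $\Gamma$ for the graph automorphism group. Then $\Out(G) = \mathrm{Outdiag}(G)\,\Phi\,\Gamma$ with $\mathrm{Outdiag}(G)$ abelian and normal in $\Out(G)$, with $\mathrm{Outdiag}(G)\,\Phi$ a normal subgroup of $\Out(G)$ having cyclic quotient $\big(\mathrm{Outdiag}(G)\,\Phi\big)/\mathrm{Outdiag}(G)\cong\Phi$, and with $\Out(G)/\big(\mathrm{Outdiag}(G)\,\Phi\big)\cong\Gamma$ isomorphic to $1$, $S_2$ (for types $A_{n}$ with $n\geq 2$, $D_{n}$ with $n\geq 5$, and $E_6$) or $S_3$ (only for $D_4$). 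Setting $A = \mathrm{Outdiag}(G)$, $B = \mathrm{Outdiag}(G)\,\Phi$ and $C = \Out(G)$ then yields the required series. In the exceptional cases $\Sp_4(2^f)$, $G_2(3^f)$, $F_4(2^f)$ and the Suzuki and Ree families ${}^2\!B_2(q)$, ${}^2\!G_2(q)$, ${}^2\!F_4(q)$, the group $\Out(G)$ is already cyclic, and one takes $A = 1$, $B = C = \Out(G)$.

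The main obstacle is not conceptual but consists of the bookkeeping needed to make the above uniform: one must check the twisted types ${}^2\!A_n$, ${}^2\!D_n$, ${}^2\!E_6$, ${}^3\!D_4$, where the diagram symmetry is absorbed into the twisting so that $\Gamma = 1$ and $\Phi$ is the appropriate cyclic group of field automorphisms; one must verify in each type that $\mathrm{Outdiag}(G)$ is genuinely abelian (it is cyclic, except for a possible elementary abelian group of order $4$ in type $D_n$ with $n$ even); and one must confirm that $\mathrm{Outdiag}(G)\,\Phi$ is normal in $\Out(G)$, which holds because $\Phi$ normalizes $\mathrm{Outdiag}(G)$ while $\Gamma$ normalizes $\mathrm{Outdiag}(G)$ and normalizes $\Phi$ modulo $\mathrm{Outdiag}(G)$. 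None of this is difficult, but it is where all of the casework lives.
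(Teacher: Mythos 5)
Your proposal is correct and follows essentially the same route as the paper, which simply cites this as a standard consequence of the classification of finite simple groups; your write-up merely expands the citation into the explicit case analysis (alternating, sporadic, and Lie type via $\mathrm{Outdiag}$, field, and graph automorphisms as in Gorenstein--Lyons--Solomon, Theorem 2.5.12). The casework you outline, including the graph--field exceptions for $B_2$, $G_2$, $F_4$ in special characteristic and the twisted families, is the standard bookkeeping behind the paper's one-line proof.
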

\begin{proof}
This is a standard result by the classification of finite simple groups.
\end{proof}


We now prove an orbit theorem about $p$-solvable groups.

\begin{prop}\label{prop3}
Let $p$ be a prime such that $p \geq 3$, and assume $H$ is a $p$-solvable group such that $\bF(H) =1$. 
Suppose that $V=F^*(H) = S_1 \times \cdots \times S_n$ where $S_i \cong S$ for $1 \le k \le n$ and $S$ is a nonabelian simple group. Then $H$ acts faithfully on $V$. Since $V$ acts by inner automorphisms on $V$ this implies that $G=H/V$ embeds in $\Out(V) \cong \Out(S) \wr \Sym(n)$. Moreover $G$ acts on $\Irr(V)$. We consider the action of $G$ on $\Irr(V)$ and have the followings,

\begin{enumerate}
\item There exist four $G$-orbits with representatives $v_1$, $v_2$, $v_3$ and $v_4 \in \Irr(V)$ of different degrees such that for any $P \in \Syl_p(G)$, we have $\bC_P(v_j) \subseteq \Out(S_1) \times \cdots \times \Out(S_n)$ for $1 \leq j \leq 4$.

\item Assume $p \geq 5$, then there exists $N \nor G$,  $N \subseteq \bF_2(G)$ and there exist four $G$-orbits with representatives $v_1$, $v_2$, $v_3$ and $v_4 \in \Irr(V)$ of different degrees such that for any $P \in \Syl_p(G)$, we have $\bC_P(v_j) \subseteq N$ for $1 \leq j \leq 4$. Moreover, the Sylow $p$-subgroup of $N \bF(G)/\bF(G)$ is abelian. 

\item Assume $p = 3$, then there exists $N \nor G$,  $N \subseteq \bF_3(G)$ and there exist four $G$-orbits with representatives $v_1$, $v_2$, $v_3$ and $v_4 \in \Irr(V)$ of different degrees such that for any $P \in \Syl_p(G)$, we have $\bC_P(v_j) \subseteq N$ for $1 \leq j \leq 4$. Moreover, the Sylow $p$-subgroup of $N \bF_2(G)/\bF_2(G)$ is abelian.
\end{enumerate}

\end{prop}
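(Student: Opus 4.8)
The plan is to analyze the action of $G \leq \Out(S) \wr \Sym(n)$ on $\Irr(V) = \Irr(S_1 \times \cdots \times S_n)$ by combining a "regular orbit" input on the individual simple factors with a combinatorial argument on how $G$ permutes the $n$ coordinates. First I would set up notation: write $B = \Out(S_1) \times \cdots \times \Out(S_n)$ for the base group, so that $G/(G \cap B)$ embeds in $\Sym(n)$ acting on the factors; let $\bar G$ denote this image. An element of $\Irr(V)$ is a tuple $(\lambda_1, \dots, \lambda_n)$ with $\lambda_i \in \Irr(S_i)$, and $G$ acts by permuting coordinates (via $\bar G$) and applying outer automorphisms. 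For part (1), I would first invoke Lemma~\ref{simplecoprime} — or rather a coprime-free analogue; since here we are not assuming coprimality, I would instead use Lemma~\ref{chardegsimple} ($|cd(S)| \geq 4$) together with the structure of $\Out(S)$ from Lemma~\ref{simpleouter} to produce, on a single copy of $S$, four $\Out(S)$-invariant "degree classes" and within a suitable one an orbit on which a Sylow $p$-subgroup of $\Out(S)$ acts trivially. The point is that $\Out(S)$ has a normal series $A \nor B \nor C$ with $A$ abelian, $B/A$ cyclic, $C/B$ of order $\leq 6$; for $p \geq 3$ a Sylow $p$-subgroup of $\Out(S)$ is contained in the preimage of the Sylow $p$ of $B/A$ together with $A_p$, and one checks it acts with bounded orbits on $\Irr(S)$, in fact fixing characters of four distinct degrees.

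Next, to build the global orbit representatives $v_j$ I would take $v_j = (\mu_j, \mu_j, \dots, \mu_j)$ diagonal, or small perturbations thereof, where $\mu_j \in \Irr(S)$ runs over four characters of distinct degrees chosen as above so that each $\mu_j$ is fixed by a Sylow $p$-subgroup of $\Out(S)$. Because the $\mu_j$ have distinct degrees, the four resulting $v_j$ automatically lie in $G$-orbits of distinct degrees (the degree of $v_j$ as a character of $V$ is $\mu_j(1)^n$, and these are distinct). The key computation is then: for $P \in \Syl_p(G)$, $\bC_P(v_j)$ consists of elements that both fix the coordinate tuple (so project trivially to $\bar G$, since $v_j$ is constant across coordinates one needs a little care — better to break the constant tuples by using $v_j$ that is constant but with the *coordinates themselves* distinguished, e.g. append a fixed non-constant "marker" tuple of distinct-degree characters to pin down all $n$ coordinates) and act trivially via the $\Out(S_i)$ components on each $\mu_j$. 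The first condition forces $\bC_P(v_j) \leq G \cap B = \Out(S_1) \times \cdots \times \Out(S_n)$, which is exactly (1). For a cleaner argument I would instead choose $v_j$ with $n$ pairwise-distinct-degree entries to kill the $\Sym(n)$-action outright, then adjust so that the entries are still $\Out(S)$-fixed by a Sylow $p$; Lemma~\ref{chardegsimple} gives only four degrees, so for $n > 4$ this needs the $p'$-part of $\Out(S)$ to absorb the permutations — this is the delicate bookkeeping step.

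For parts (2) and (3), the extra content is controlling the Sylow $p$-subgroup of $\bar G \leq \Sym(n)$ itself, not just of the base. Here I would use that for $p \geq 5$ (resp. $p = 3$) a transitive $p$-subgroup of $\Sym(n)$ is "small" — more precisely, I would pass to a normal subgroup $N \nor G$ built as the preimage of $O_{p'}(\bar G)$ intersected with $B$, and use that $\bar G / O_{p'}(\bar G)$, being a $p$-solvable group with trivial $p'$-core acting on $n$ points, has Fitting height at most $1$ when $p \geq 5$ (giving $N \subseteq \bF_2(G)$) and at most $2$ when $p = 3$ (giving $N \subseteq \bF_3(G)$). The abelianness of the Sylow $p$-subgroup of $N\bF(G)/\bF(G)$ (resp. $N\bF_2(G)/\bF_2(G)$) should follow because, after quotienting by the Fitting subgroup, the only $p$-contribution left in $N$ comes from the abelian group $A$ in Lemma~\ref{simpleouter} applied coordinatewise, which is abelian. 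The main obstacle, I expect, is precisely this last point: extracting a normal subgroup $N$ of small Fitting height that still contains all the $\bC_P(v_j)$ while simultaneously having abelian Sylow $p$-subgroups modulo the appropriate Fitting term — this requires a careful simultaneous choice of the $v_j$ (so their joint $P$-stabilizer lands in $A^n$ rather than the larger cyclic or $\Sym$-pieces) and a structural input on $p$-solvable permutation groups to bound the Fitting height of the coordinate-permutation part.
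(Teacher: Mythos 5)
Your setup (base group $B=\Out(S_1)\times\cdots\times\Out(S_n)$, image $\bar G\leq \Sym(n)$, tuples of characters) matches the paper's, and you correctly identify where the difficulty lies; but the proposal does not actually resolve it, and the fix you sketch would not work. For part (1) the only thing that matters is that $\bC_P(v_j)$ projects trivially to $\bar G$ (your opening paragraph's attempt to make the entries $\mu_j$ fixed by a Sylow $p$-subgroup of $\Out(S)$ is both unjustified and unnecessary for (1)). Your plan is to ``pin down all $n$ coordinates'' with pairwise distinct-degree entries, but Lemma~\ref{chardegsimple} only guarantees four degrees, so for $n>4$ any tuple built from them is stabilized, at the level of degree patterns, by a Young subgroup $\Sym(\Omega_1)\times\cdots\times\Sym(\Omega_4)$ of $\bar G$, and there is no reason its $p$-part is trivial; the hope that ``the $p'$-part of $\Out(S)$ absorbs the permutations'' is not a mechanism at all, since an element of $P$ with nontrivial image in $\bar G$ can fix such a tuple outright. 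The missing ingredient in the paper is twofold: (i) an induction through the orbit/imprimitivity structure of $\bar G$ (intransitive case by induction; transitive case via a maximal system of blocks $\Delta_1,\dots,\Delta_m$, block stabilizers $\bar J_i$, kernel $\bar K$, primitive quotient $\bar G/\bar K$ on $m$ points), and (ii) Dolfi's theorem (\cite[Lemma 1]{SDperm}(b)): since $\bar G/\bar K$ is $p$-solvable it does not contain $\Alt(m)$, so for $m\geq 5$ there is a partition of the $m$ blocks into four parts, of not all equal sizes, whose common setwise stabilizer is a $2$-group --- hence meets a Sylow $p$-subgroup trivially because $p$ is odd. Distributing the four degree-distinct block characters $\theta,\lambda,\chi,\psi$ according to this partition (with a short case analysis on the part sizes $t_1\geq t_2\geq t_3\geq t_4$ to keep the four resulting global degrees distinct, plus separate explicit constructions for $m=3,4$ when $p=3$) is what forces $\bC_P(v_j)\subseteq K$ and then $\subseteq B$. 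Without Dolfi's partition result or an equivalent, your construction cannot be completed for $n>4$, and you also never address how to keep the four global representatives of \emph{different} degrees once the entries are no longer constant.

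For parts (2) and (3) the paper's route is simpler than yours: it takes $N=\bF_2(G\cap B)$ (resp.\ $\bF_3(G\cap B)$), which is normal in $G$ and lies in $\bF_2(G)$ (resp.\ $\bF_3(G)$) because $G\cap B\nor G$; the abelianness of the Sylow $p$-subgroup of $N\bF(G)/\bF(G)$ (resp.\ $N\bF_2(G)/\bF_2(G)$) comes coordinatewise from Lemma~\ref{simpleouter}: for $p\geq 5$ the $p$-part of $\Out(S)$ sits in the metabelian layer $A\nor B$ with $B/A$ cyclic, and for $p=3$ one extra layer is needed because of the possible $S_3$ on top. Your proposal to use $O_{p'}(\bar G)$ and Fitting-height bounds for $p$-solvable permutation groups is not how the paper proceeds and, as stated, does not obviously give a subgroup containing all $\bC_P(v_j)$; but this is secondary --- the essential gap is the treatment of the permutation action in part (1).
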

\begin{proof}

(2) and (3) follow from (1) since $C=\Out(S)$ has a normal series of the form $A \nor B \nor C$ where $A$ is abelian, $B/A$ is cyclic and $C/B \cong 1, S_2$ or $S_3$ by Lemma ~\ref{simpleouter}. For (2) we choose $N=\bF_2(G \cap (\Out(S_1) \times \cdots \times \Out(S_n)))$. For (3) we choose $N=\bF_3(G \cap (\Out(S_1) \times \cdots \times \Out(S_n)))$.

Thus, we only need to show (1).

We note that $\Irr(V)=\Irr(S_1) \times \cdots \times \Irr(S_n)$. We denote $\bar G=G/(G \cap (\Out(S_1) \times \cdots \times \Out(S_n)))$, and clearly $\bar G$ is a permutation group on a set of $n$ elements.

Step 1. Assume the action of $\bar G$ is not transitive, then the result follows easily by induction.


We recall some basic facts about the decompositions of transitive groups. Let $\bar G$ be a transitive permutation group acting on a set $\Delta$, $|\Delta| = n$. A system of imprimitivity is a partition of $\Delta$, invariant under $\bar G$. A primitive group has no non-trivial system of imprimitivity. Let $(\Delta_1, \cdots, \Delta_m)$ denote a system of imprimitivity of $\bar G$ with maximal block-size $b$ $(1 \leq b <n; b = 1$ if and only if $\bar G$ is primitive; $bm=n$).

Step 2. Assume $n>1$ and the action of $\bar G$ is transitive, then either $\bar G$ is imprimitive and $b>1$ or $\bar G$ itself is primitive and $b=1$.

Let $\bar J_i$ be the set-wise stabilizer of $\bar G$ on $\Delta_i$, i.e. $\bar J_i=\Stab_{\bar G}(\Delta_i)$. Then the groups $\bar J_i$s are permutationally equivalent transitive groups of degree $b$. Let $\bar K =\cap_i \bar J_i$. We know that $\bar K$ is a normal subgroup of $\bar G$ stabilizing each of the blocks $\Delta_i$, and $\bar G/\bar K$ is a primitive group of degree $m$ acting upon the set of blocks $\Delta_i$ and hence $\{1,\cdots, m\}$. We denote the set $\{1,\cdots, m\}$ to be $\Omega$.




  We define $V_i=\prod_{t \in \Delta_i} S_t$. Consider the action of $\bar J_i$ on $\Irr(V_i)=\prod_{t \in \Delta_i} \Irr(S_t)$. If $\bar G$ is imprimitive, then since $1< b <n$, by induction there exist four $\bar J_i$-orbits with representatives $\theta_i, \lambda_i, \chi_i$ and $\psi_i \in \Irr(V_i)$ such that for any $\bar P \in \Syl_p(\bar J_i)$, $\bC_{\bar P}(\theta_i) = \bC_{\bar P}(\lambda_i) = \bC_{\bar P}(\chi_i) = \bC_{\bar P}(\psi_i) =1 $. If $\bar G$ is primitive, then $b=1$ and the same conclusion holds by Lemma ~\ref{chardegsimple}.

  It is clear that we may choose $\theta_i$, $1 \leq i \leq m$ to be conjugate by the action of $\bar G/ \bar K$ and we can do the same for $\lambda_i$, $\chi_i$ and $\psi_i$, $1 \leq i \leq m$. We also denote $\theta(1)=\theta_i(1)$, $\lambda(1)=\lambda_i(1)$, $\chi(1)=\chi_i(1)$ and $\psi(1)=\psi_i(1)$. By re-indexing, we may assume that $\theta(1) > \lambda(1) > \chi(1) > \psi(1)$. 

 Assume $p \nmid |\bar G/\bar K|$, we set $v_1=\prod_{i=1}^m \theta_i, v_2=\prod_{i=1}^m \lambda_i, v_3=\prod_{i=1}^m \chi_i$ and $v_4=\prod_{i=1}^m \psi_i$. It is clear that $v_1(1)=\theta(1)^m > v_2(1)=\lambda(1)^m > v_3(1)=\chi(1)^m > v_4(1)=\psi(1)^m$. 

 Assume $p \mid |\bar G/\bar K|$ and $m \geq 5$, then since $\bar G/\bar K$ is $p$-solvable, we know that $\Alt(m) \not \leq \bar G/\bar K$. By \cite[Lemma 1]{SDperm}(b), we can find a partition $\Omega=\Omega_1 \cup \Omega_2 \cup \Omega_3 \cup \Omega_4$ such that $\Stab_{\bar G/ \bar K}(\Omega_1) \cap \Stab_{\bar G/ \bar K}(\Omega_2) \cap \Stab_{\bar G/ \bar K}(\Omega_3) \cap \Stab_{\bar G/ \bar K}(\Omega_4)$ is a $2$-group, and $|\Omega_1|$, $|\Omega_2|$, $|\Omega_3|$ and $|\Omega_4|$ are not all the same. We denote $t_i=|\Omega_i|$, $1 \leq i \leq 4$. By re-indexing, we may assume that $t_1 \geq t_2 \geq t_3 \geq t_4$.

 Since we know that not all the $t_i$s are the same, we must have one of the followings,
 \begin{enumerate}
 \item $t_1 > t_2 \geq t_3 \geq t_4$. In this case, we construct four irreducible characters,
  \begin{enumerate}
   \item $v_1=\prod_{i=1}^{m} \alpha_i$, where $\alpha_i=\theta_i$ if $i \in \Omega_1$, $\alpha_i=\lambda_i$ if $i \in \Omega_2$, $\alpha_i=\chi_i$ if $i \in \Omega_3$, $\alpha_i=\psi_i$ if $i \in \Omega_4$. 
   \item $v_2=\prod_{i=1}^{m} \beta_i$, where $\beta_i=\lambda_i$ if $i \in \Omega_1$, $\beta_i=\theta_i$ if $i \in \Omega_2$, $\beta_i=\chi_i$ if $i \in \Omega_3$, $\beta_i=\psi_i$ if $i \in \Omega_4$. 
   \item $v_3=\prod_{i=1}^{m} \gamma_i$, where $\gamma_i=\chi_i$ if $i \in \Omega_1$, $ \gamma_i=\theta_i$ if $i \in \Omega_2$, $ \gamma_i=\lambda_i$ if $i \in \Omega_3$, $ \gamma_i=\psi_i$ if $i \in \Omega_4$. 
   \item $v_4=\prod_{i=1}^{m} \delta_i$, where $\delta_i=\psi_i$ if $i \in \Omega_1$, $\delta_i=\theta_i$ if $i \in \Omega_2$, $\delta_i=\lambda_i$ if $i \in \Omega_3$, $\delta_i=\chi_i$ if $i \in \Omega_4$. 
  \end{enumerate}
 Those four characters have different degrees since

 $v_1(1)=\theta(1)^{t_1} \lambda(1)^{t_2} \chi(1)^{t_3} \psi(1)^{t_4} > v_2(1)=\lambda(1)^{t_1} \theta(1)^{t_2} \chi(1)^{t_3} \psi(1)^{t_4} >$\\
 $v_3(1)= \chi(1)^{t_1} \theta(1)^{t_2} \lambda(1)^{t_3} \psi(1)^{t_4}> v_4(1)=\psi(1)^{t_1} \theta(1)^{t_2} \lambda(1)^{t_3} \chi(1)^{t_4}$.

 \item $t_1=t_2>t_3 \geq t_4$. In this case, we construct four irreducible characters,
  \begin{enumerate}
   \item $v_1=\prod_{i=1}^{m} \alpha_i$, where $\alpha_i=\theta_i$ if $i \in \Omega_1$, $\alpha_i=\lambda_i$ if $i \in \Omega_2$, $\alpha_i=\chi_i$ if $i \in \Omega_3$, $\alpha_i=\psi_i$ if $i \in \Omega_4$. 
   \item $v_2=\prod_{i=1}^{m} \beta_i$, where $\beta_i=\theta_i$ if $i \in \Omega_1$, $\beta_i=\chi_i$ if $i \in \Omega_2$, $\beta_i=\lambda_i$ if $i \in \Omega_3$, $\beta_i=\psi_i$ if $i \in \Omega_4$. 
   \item $v_3=\prod_{i=1}^{m} \gamma_i$, where $\gamma_i=\theta_i$ if $i \in \Omega_1$, $\gamma_i=\psi_i$ if $i \in \Omega_2$, $\gamma_i=\lambda_i$ if $i \in \Omega_3$, $\gamma_i=\chi_i$ if $i \in \Omega_4$. 
   \item $v_4=\prod_{i=1}^{m} \delta_i$, where $\delta_i=\chi_i$ if $i \in \Omega_1$, $\delta_i=\psi_i$ if $i \in \Omega_2$, $\delta_i=\lambda_i$ if $i \in \Omega_3$, $\delta_i=\theta_i$ if $i \in \Omega_4$. 
  \end{enumerate}
 Those four characters have different degrees since

 $v_1(1)=\theta(1)^{t_1} \lambda(1)^{t_2} \chi(1)^{t_3} \psi(1)^{t_4} > v_2(1)=\theta(1)^{t_1} \chi(1)^{t_2} \lambda(1)^{t_3} \psi(1)^{t_4} >$\\
 $v_3(1)=\theta(1)^{t_1} \psi(1)^{t_2} \lambda(1)^{t_3} \chi(1)^{t_4} > v_4(1)= \chi(1)^{t_1} \psi(1)^{t_2} \lambda(1)^{t_3} \theta(1)^{t_4}$.

 \item $t_1=t_2=t_3 > t_4$. In this case, we construct four irreducible characters,
   \begin{enumerate}
     \item $v_1=\prod_{i=1}^{m} \alpha_i$, where $\alpha_i=\theta_i$ if $i \in \Omega_1$, $\alpha_i=\lambda_i$ if $i \in \Omega_2$, $\alpha_i=\chi_i$ if $i \in \Omega_3$, $\alpha_i=\psi_i$ if $i \in \Omega_4$. 
     \item $v_2=\prod_{i=1}^{m} \beta_i$, where $\beta_i=\theta_i$ if $i \in \Omega_1$, $\beta_i=\lambda_i$ if $i \in \Omega_2$, $\beta_i=\psi_i$ if $i \in \Omega_3$, $\beta_i=\chi_i$ if $i \in \Omega_4$. 
     \item $v_3=\prod_{i=1}^{m} \gamma_i$, where $\gamma_i=\theta_i$ if $i \in \Omega_1$, $\gamma_i=\chi_i$ if $i \in \Omega_2$, $\gamma_i=\psi_i$ if $i \in \Omega_3$, $\gamma_i=\lambda_i$ if $i \in \Omega_4$. 
     \item $v_4=\prod_{i=1}^{m} \delta_i$, where $\delta_i=\lambda_i$ if $i \in \Omega_1$, $\delta_i=\chi_i$ if $i \in \Omega_2$, $\delta_i=\psi_i$ if $i \in \Omega_3$, $\delta_i=\theta_i$ if $i \in \Omega_4$. 
   \end{enumerate}
 Those four characters have different degrees since

$v_1(1)=\theta(1)^{t_1} \lambda(1)^{t_2} \chi(1)^{t_3} \psi(1)^{t_4}> v_2(1)=\theta(1)^{t_1} \lambda(1)^{t_2} \psi(1)^{t_3} \chi(1)^{t_4}>$ \\
$v_3(1)=\theta(1)^{t_1} \chi(1)^{t_2} \psi(1)^{t_3} \lambda(1)^{t_4} > v_4(1)= \lambda(1)^{t_1} \chi(1)^{t_2} \psi(1)^{t_3} \theta(1)^{t_4}$.
\end{enumerate}

Assume $p \mid |\bar G/ \bar K|$, $m=4$ and $p=3$. We set $v_1=\theta_1 \lambda_2  \chi_3 \psi_4$, $v_2=\theta_1  \lambda_2  \psi_3 \psi_4$, $v_3=\theta_1 \chi_2 \psi_3 \psi_4$ and $v_4=\lambda_1 \chi_2 \psi_3 \psi_4$. They have different degrees since $v_1(1)=\theta(1) \lambda(1) \chi(1) \psi(1)> v_2(1)=\theta(1)  \lambda(1)  \psi^2(1) > v_3(1)=\theta(1) \chi(1) \psi^2(1) > v_4(1)= \lambda(1) \chi(1) \psi^2(1)$.

 It is clear that $\bC_{\bar G/ \bar K}(v_1),\bC_{\bar G/ \bar K}(v_2),\bC_{\bar G/ \bar K}(v_3)$ and $\bC_{\bar G/ \bar K}(v_4)$ is a $2$-group.\\

 Assume $p \mid |\bar G/ \bar K|$, $m=3$ and  $p=3$. We set $v_1=\theta_1 \lambda_2  \chi_3$, $v_2=\theta_1  \lambda_2  \psi_3$, $v_3=\theta_1 \chi_2 \psi_3$ and $v_4=\lambda_1 \chi_2 \psi_3$. They have different degrees since $v_1(1)=\theta(1) \lambda(1)  \chi(1)> v_2(1)=\theta(1)  \lambda(1)  \psi(1)> v_3(1)=\theta(1) \chi(1) \psi(1) > v_4(1)= \lambda(1) \chi(1) \psi(1)$.

 It is clear that $\bC_{\bar G/ \bar K}(v_1),\bC_{\bar G/ \bar K}(v_2),\bC_{\bar G/ \bar K}(v_3)$ and $\bC_{\bar G/ \bar K}(v_4)$ is a $2$-group.\\

Thus, we may always find four irreducible characters $v_1$, $v_2$, $v_3$ and $v_4$ in $\Irr(V)$ of different degrees such that for any $P \in \Syl_p(G)$, we have $\bC_P(v_j) \subseteq K$ and thus $\bC_P(v_j) \subseteq  \Out(S)^n$ for $1 \leq j \leq 4$.


Step 3. Assume $n=1$, then the result follows by Lemma ~\ref{chardegsimple}.
\end{proof}

\begin{prop}\label{prop4}
Let $G$ be a finite $p$-solvable group where $O_{\infty}(G)=1$. Then $F^*(G)=E_1 \times \dots \times E_m$ is a product of $m$ finite non-abelian simple groups $E_j$, $1 \leq j \leq m$ permuted by $G$. We denote $\bar G=G/F^*(G)$. 
\begin{enumerate}
\item Assume $p \ge 5$, then there exists $N \nor \bar G$ where $N \subseteq \bF_2(\bar G)$, and there exists $v \in \Irr(F^*(G))$ such that for any $P \in \Syl_p(\bar G)$, we have $\bC_P(v) \subseteq N$. Moreover, the Sylow $p$-subgroup of $N \bF(\bar G)/\bF(\bar G)$ is abelian.

\item Assume $p = 3$, then there exists $N \nor \bar G$ where $N \subseteq \bF_3(\bar G)$, and there exists $v \in \Irr(F^*(G))$ such that for any $P \in \Syl_p(\bar G)$, we have $\bC_P(v) \subseteq N$. Moreover, the Sylow $p$-subgroup of $N \bF_2(\bar G)/\bF_2(\bar G)$ is abelian.
\end{enumerate}
\end{prop}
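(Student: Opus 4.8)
The plan is to deduce Proposition~\ref{prop4} from Proposition~\ref{prop3} by splitting $F^*(G)$ according to the isomorphism types of its simple direct factors. By hypothesis $F^*(G)=E_1\times\cdots\times E_m$ with the $E_j$ nonabelian simple, so $Z(F^*(G))=1$; since $C_G(F^*(G))\subseteq F^*(G)$ this forces $C_G(F^*(G))=1$, so $G$ acts faithfully on $F^*(G)$ and $\bar G=G/F^*(G)$ embeds in $\Out(F^*(G))$. Grouping the $E_j$ by isomorphism type, I would write $F^*(G)=W_1\times\cdots\times W_r$, where $W_i$ is the direct product of the $n_i$ factors isomorphic to a fixed simple group $T_i$ and the $T_i$ are pairwise non-isomorphic. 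Since $G$ permutes the $E_j$ and conjugation preserves isomorphism type, each $W_i$ is normal in $G$, and $\Out(F^*(G))=\prod_{i=1}^r\Out(W_i)$ with $\Out(W_i)\cong\Out(T_i)\wr\Sym(n_i)$. Writing $\rho_i\colon\bar G\to\Out(W_i)$ for the $i$-th projection and $\bar G_i=\rho_i(\bar G)$, one has $\bar G\leq\prod_i\bar G_i$. I would first record the bookkeeping facts I need, each standard and proved by a short induction on $k$: $\bF_k(A\times B)=\bF_k(A)\times\bF_k(B)$; any subgroup of a group of Fitting length $\leq k$ has Fitting length $\leq k$; and a normal subgroup of $H$ of Fitting length $\leq k$ is contained in $\bF_k(H)$. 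Combining these with the above structure gives $\bF_k(\bar G)=\bar G\cap\prod_i\bF_k(\bar G_i)$ for every $k$.

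Next, for each $i$ I would apply Proposition~\ref{prop3} to the group $H_i:=G/C_G(W_i)$. This group is $p$-solvable and acts faithfully on $W_i$; the image $\overline{W_i}$ of $W_i$ is a normal subgroup of $H_i$ isomorphic to $W_i$, with $C_{H_i}(\overline{W_i})=1$ because $\Inn(W_i)$ is self-centralizing in $\Aut(W_i)$ (as $Z(W_i)=1$). Hence $\bF(H_i)=1$ and $F^*(H_i)=\overline{W_i}$, a direct product of $n_i$ copies of the simple group $T_i$; moreover $H_i/\overline{W_i}=\bar G_i$, and the action of $\bar G_i$ on $\Irr(W_i)$ is precisely the one considered in Proposition~\ref{prop3} (inner automorphisms act trivially on $\Irr(W_i)$). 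So Proposition~\ref{prop3}(2) for $p\geq5$, resp.\ (3) for $p=3$, applies and produces a normal subgroup $N_i\nor\bar G_i$ with $N_i\subseteq\bF_2(\bar G_i)$, resp.\ $N_i\subseteq\bF_3(\bar G_i)$, and a character $v_i\in\Irr(W_i)$ (I keep just one of the four orbit representatives provided) such that $\bC_Q(v_i)\subseteq N_i$ for every $Q\in\Syl_p(\bar G_i)$, and such that the Sylow $p$-subgroup of $N_i\bF(\bar G_i)/\bF(\bar G_i)$, resp.\ $N_i\bF_2(\bar G_i)/\bF_2(\bar G_i)$, is abelian.

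Finally I would assemble the global data: put $v:=v_1\times\cdots\times v_r\in\Irr(W_1)\times\cdots\times\Irr(W_r)=\Irr(F^*(G))$ and $N:=\bar G\cap\prod_i N_i$. Then $N\nor\bar G$, and by the bookkeeping identity $N\subseteq\bar G\cap\prod_i\bF_2(\bar G_i)=\bF_2(\bar G)$, resp.\ $N\subseteq\bF_3(\bar G)$. Since $\bar G$ acts on $\Irr(F^*(G))=\prod_i\Irr(W_i)$ componentwise through the $\rho_i$, given $P\in\Syl_p(\bar G)$ any $x\in\bC_P(v)$ has $\rho_i(x)\in\rho_i(P)\cap\bC_{\bar G_i}(v_i)$ for every $i$; choosing $Q_i\in\Syl_p(\bar G_i)$ with $\rho_i(P)\subseteq Q_i$ gives $\rho_i(x)\in\bC_{Q_i}(v_i)\subseteq N_i$, hence $x\in\bar G\cap\prod_i N_i=N$, i.e.\ $\bC_P(v)\subseteq N$. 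For the final clause, $N\bF(\bar G)/\bF(\bar G)\cong N/(N\cap\prod_i\bF(\bar G_i))$ embeds, via $N\hookrightarrow\prod_i N_i$, into $\prod_i\bigl(N_i\bF(\bar G_i)/\bF(\bar G_i)\bigr)$, whose Sylow $p$-subgroup is abelian; hence so is that of $N\bF(\bar G)/\bF(\bar G)$, and the same computation with $\bF_2$ in place of $\bF$ disposes of the case $p=3$. The one step that I expect to need genuine care is the interaction of the ascending Fitting series with direct products and with passage to subdirect subgroups, i.e.\ establishing $\bF_k(\bar G)=\bar G\cap\prod_i\bF_k(\bar G_i)$ together with the ancillary fact that a normal subgroup of bounded Fitting length sits inside the corresponding term of the ascending Fitting series; the remainder is a routine unwinding of the product decomposition of $F^*(G)$ and of the conclusions of Proposition~\ref{prop3}.
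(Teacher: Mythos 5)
Your proposal is correct and takes essentially the same approach as the paper: split $F^*(G)$ into $G$-invariant homogeneous blocks, apply Proposition~\ref{prop3}(2)/(3) to each block acting faithfully, and glue with $v=\prod_i v_i$ and $N=\bar G\cap\prod_i N_i$, using that a normal subgroup of Fitting length at most $k$ lies in the $k$-th Fitting term. The only difference is cosmetic---you group the simple factors by isomorphism type where the paper groups them by $G$-orbits---and your bookkeeping (the faithful quotients $G/\bC_G(W_i)$ and the identity $\bF_k(\bar G)=\bar G\cap\prod_i\bF_k(\bar G_i)$) is, if anything, more explicit than the paper's.
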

\begin{proof}


Clearly $G$ acts faithfully on $F^*(G)$. Since $F^*(G)$ acts by inner automorphisms on $F^*(G)$, this implies that $\bar G$ embeds in $\Out(F^*(G))$. Moreover $\bar G$ acts on $\Irr(F^*(G))$. Next, we group the simple groups in the direct product of $F^*(G)$ where $G$ acts transitively. We denote $F^*(G)=L_1 \times \cdots \times L_s$, $L_i=E_{i1} \times E_{i2} \times \cdots \times E_{im_i}$, $1 \leq i \leq s$ where $G$ transitively permutes the simples groups inside the direct product of $L_i$. Clearly $E_{i1} \cong E_{i2} \cdots \cong E_{im_i}$.

We see that $\bar G$ can be embedded as a subgroup of $\Out(L_1) \times \cdots \times \Out(L_s)$ and we denote $K_i$ to be the image of $\bar G$ in $\Out(L_i)$.


If $p \geq 5$, by applying Proposition \ref{prop3}(1) to the action of $K_i$ on $\Irr(L_i)$, there exists $v_{i} \in \Irr(L_i)$, and $N_i \nor K_i$ such that for any $P_i \in \Syl_{p}(K_i)$, $\bC_{P_i}(v_{i}) \subseteq N_i \subseteq \bF_2(K_i)$. Also the Sylow $p$-subgroup of $N_i \bF(K_i)/\bF(K_i)$ is abelian. 

Let $v=\sum v_{i}$ and $N= K \cap (\prod N_i)$. Let $P \in \Syl_{p}(G)$, and $P_i$ to be the image of $P$ in $\Irr(V_i)$. Then $\bC_P(v) \subseteq \prod \bC_{P_i}(v_{i}) \subseteq \prod N_i$. Clearly $N \bF(K)/\bF(K) \subseteq \prod N_i \bF(K_i)/\bF(K_i)$ and the result follows.

If $p=3$, by applying Proposition \ref{prop3}(2) to the action of $K_i$ on $\Irr(L_i)$, there exists $v_{i} \in \Irr(L_i)$, and $N_i \nor K_i$ such that for any $P_i \in \Syl_{p}(K_i)$, $\bC_{P_i}(v_{i}) \subseteq N_i \subseteq \bF_3(K_i)$. Also the Sylow $p$-subgroup of $N_i \bF_2(K_i)/\bF_2(K_i)$ is abelian. 

Let $v=\sum v_{i}$ and $N= K \cap (\prod N_i)$. Let $P \in \Syl_{p}(G)$, and $P_i$ to be the image of $P$ in $\Irr(V_i)$. Then $\bC_P(v) \subseteq \prod \bC_{P_i}(v_{i}) \subseteq \prod N_i$. Clearly $N \bF_3(K)/\bF_3(K) \subseteq \prod N_i \bF_3(K_i)/\bF_3(K_i)$ and the result follows.
\end{proof}

\section{$p$ part of $|G:\bF(G)|$, character degrees and conjugacy class sizes} \label{p part of G/F(G)}

We now prove Theorem A and some related results in this section. We first obtain bounds for $p$-solvable groups and then extend those to arbitrary groups.\\



\begin{theorem}\label{chardegreepsolvablebound}
Let $G$ be a finite $p$-solvable group where $p$ is a prime and $p \geq 5$. Suppose that $p^{a+1}$ does not divide $\chi(1)$ for all $\chi \in \Irr(G)$ and let $P \in \Syl_p(G)$, then $|G: \bF(G)|_p\leq p^{5.5a}$, $b(P)\leq p^{6.5a}$ and $\dl(P) \leq 5 + \log_2 a + \log_2 6.5$.
\end{theorem}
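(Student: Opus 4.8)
The plan is to induct on $|G|$ and peel off the $p$-solvable structure layer by layer, using Proposition~\ref{prop4} to control the part of $G$ above its solvable radical and the Moret\'o--Wolf bound $|G:\bF(G)|_p \le p^{19e_p(G)}$ (in fact a sharper exponent for the solvable pieces) to control the solvable radical itself. Write $O = O_\infty(G)$. If $O = G$ we are done by the solvable case, so assume $O < G$ and set $\bar G = G/O$; then $F^*(\bar G)$ is a direct product of nonabelian simple groups and $\bar G/F^*(\bar G)$ embeds in $\Out(F^*(\bar G))$. The first key step is to apply Proposition~\ref{prop4}(1) (since $p \ge 5$) to $\bar G$: this produces $v \in \Irr(F^*(\bar G))$ and a normal subgroup $N \nor \bar G/F^*(\bar G)$ with $N \subseteq \bF_2$ of the quotient, $\bC_P(v) \subseteq N$ for every Sylow $p$-subgroup, and the Sylow $p$-subgroup of $N\bF/\bF$ abelian.

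The second step is to turn the centralizer information about $v$ into a character of $G$ of large $p$-part. The point is that if $v$ has a small $p$-Sylow stabilizer inside $\bar G$, then by Clifford theory an extension/induction of $v$ (suitably combined with a faithful-enough character of $O$) yields $\chi \in \Irr(G)$ whose degree is divisible by a large power of $p$: roughly $p^{e_p(G)}$ must be at least the index $|P : \bC_P(v)|$ times contributions coming from the action of the $p$-group on the abelian/chief sections of $O$ and of $N$. Concretely, I would break $|G:\bF(G)|_p$ as a product over the chief sections of $G$: the sections inside $O$ above $\bF(G)$ are handled by the solvable bound (Moret\'o--Wolf, and for $p\ge5$ the exponent $2$ on odd-order-like pieces), the section $F^*(\bar G)$ contributes essentially nothing to $|G:\bF(G)|_p$ beyond a bounded amount because $|\Out(S)|_p$ is small, and the section $\bar G/F^*(\bar G)$ is split into $N$ and the rest: the ``rest'' has its $p$-part bounded because $\bC_P(v) \subseteq N$ forces $|P:N\cap P|$ to divide (a bounded power of) $e_p(G)$, and $N$ itself, having abelian Sylow $p$-subgroups mod $\bF$, contributes to $|G:\bF(G)|_p$ only through an abelian $p$-group acting on the Fitting subgroup, which is again bounded by the Ito--Michler-type / solvable estimates in terms of $e_p$. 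Assembling the exponents $5.5 = $ (solvable contribution, sharpened for $p \ge 5$) $+$ (simple-group and outer-automorphism contribution) $+$ (contribution of $N$) gives the claimed $p^{5.5a}$, and then $b(P) \le |P|_{\mathrm{exp}} \le p^{a} \cdot p^{5.5a} = p^{6.5a}$ since $b(\bF(G)\cap P) \le p^{a}$ by restricting a character of degree divisible by $p^a$ down and using that $\bF(G)$ has a normal abelian-by-nilpotent structure; the derived length bound follows from Isaacs' theorem $\dl(P) \le 2\log_2 b(P) + O(1)$, giving $\dl(P) \le \log_2(6.5 a) + 5 = 5 + \log_2 a + \log_2 6.5$.

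Let me restate the logical skeleton. First, reduce to the case $O_\infty(G) < G$ by the solvable case. Second, apply Proposition~\ref{prop4}(1) to $G/O_\infty(G)$ to get $v$, $N$. Third, lift: choose $\vartheta \in \Irr(O_\infty(G))$ lying over/under the relevant pieces so that an irreducible constituent $\chi$ of the induced/extended character has $\chi(1)_p$ as large as the product of the relevant orbit-index and solvable contributions; deduce the bound $a = e_p(G) \ge$ that product. Fourth, bound $|G:\bF(G)|_p$ chief-section by chief-section: $\le |G:\bF(G)O_\infty(G)|_p \cdot |\bF(G)O_\infty(G) : \bF(G)|_p$; the first factor comes from $\Out$ of the socle and is $O(1)\cdot(\text{stabilizer index})$, the second from the solvable estimate applied inside $O_\infty(G)$. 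Fifth, assemble constants to get $5.5a$, then $6.5a$ for $b(P)$, then the $\dl$ bound via Isaacs.

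The main obstacle I anticipate is the bookkeeping in the third and fourth steps: getting the centralizer condition $\bC_P(v)\subseteq N$ with $N$ small to actually produce a character of $G$ (not just of $G/O_\infty(G)$) with the required $p$-divisibility, which forces one to control how the $p$-group acts simultaneously on $O_\infty(G)$ and on the simple socle, and to make sure the various ``bounded'' contributions (from $\Out(S)$, from $N$ having abelian $p$-Sylow mod $\bF$, from the solvable radical) add up to exactly $5.5$ and not something larger. In particular, the claim that the $N$-part contributes boundedly hinges on ``abelian $p$-group acting on a nilpotent group has $p$-part of the action bounded in terms of $e_p$,'' which should follow from the Moret\'o--Wolf solvable machinery but needs to be invoked carefully with the right constant. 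The simple-group input (Lemmas~\ref{simplecoprime}, \ref{chardegsimple}, \ref{simpleouter}) is already packaged inside Proposition~\ref{prop4}, so that part should be routine; the delicate accounting is entirely on the solvable side and in the Clifford-theory lifting.
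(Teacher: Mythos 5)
Your outline does follow the same broad architecture as the paper's argument (pass to $T=O_\infty(G)$, apply Proposition~\ref{prop4}(1) to $G/T$, and convert the centralizer condition $\bC_P(v)\subseteq N$ into irreducible characters of large $p$-part), but the steps that actually produce the constant $5.5$ are missing, and in places the reasoning you substitute would not yield a bound of the form $p^{5.5a}$ at all. Concretely, the paper does not use the Moret\'o--Wolf exponent $19$ ``sharpened'' in an unspecified way: it quotes the bound $|T:\bF(G)|_p\leq p^{2.5a}$ for $p$-solvable... in fact solvable-radical pieces with $p\ge 5$ from \cite[Remark of Corollary 5.3]{YY5}, and then bounds $|G/T|_p$ by exactly $p^{3a}$ by factoring it as $|\bar G:N|_p\cdot|N\bF(\bar G)/\bF(\bar G)|_p\cdot|N\cap\bF(\bar G)|_p$ with each factor at most $p^a$: the first because the $\bar G$-orbit of $v$ has size divisible by $|\bar G:N|_p$ and hence divides some character degree, the second via Gow's regular orbit theorem applied to the \emph{abelian} Sylow $p$-subgroup of $N\bF(\bar G)/\bF(\bar G)$ acting faithfully on $\Irr(Y/\Phi(Y))$, $Y=O_{p'}(\bF(\bar G))$, and the third via Lemma~\ref{simplecoprime} (a regular orbit of the coprime action of the $p$-group $N\cap\bF(\bar G)$ on $\Irr(F^*(G/T))$). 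Your sketch replaces these three quantitative inputs by assertions that various sections contribute ``a bounded amount''; additive $O(1)$ contributions to the exponent are not acceptable here, since the target is a clean multiple of $a$ valid for all $a$, and without naming the $2.5a$ solvable bound and the two regular-orbit theorems there is no way to assemble $2.5+1+1+1=5.5$. Note also that $F^*(G/T)$ contributes nothing to $|G:\bF(G)|_p$ because $G$ is $p$-solvable, so its simple factors are $p'$-groups; this has nothing to do with $|\Out(S)|_p$ being small.

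Two further points fail as written. For the derived length you invoke ``Isaacs' theorem $\dl(P)\le 2\log_2 b(P)+O(1)$'': no such statement is being used (and with $b(P)\le p^{6.5a}$ it would only give a bound linear in $a$, not $5+\log_2 a+\log_2 6.5$). The paper's route is: Isaacs \cite[Theorem 12.26]{Isaacs/book} gives an abelian subgroup $B\le P$ of index at most $b(P)^4$, Podoski--Szegedy upgrades it to a normal abelian subgroup of index at most $|P:B|^2\le b(P)^8$, and then Huppert's bound on the derived length of a $p$-group of order $p^{8s}$ gives $\dl(P)\le 2+\log_2(8s)=5+\log_2 s$ with $s\le 6.5a$; you need this chain (or an equivalent one), not a single cited inequality. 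Finally, for $b(P)\le p^{6.5a}$ the clean argument is simply $b(P)\le|P:O_p(G)|\,b(O_p(G))$ together with $b(O_p(G))\le p^a$, which holds because every $\theta\in\Irr(O_p(G))$ divides the degree of a character of $G$ lying over it; the ``abelian-by-nilpotent structure'' of $\bF(G)$ you appeal to is neither needed nor obviously relevant.
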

\begin{proof}
Let $T=O_{\infty}(G)$, the maximal normal subgroup of $G$. Since $\bF(G) \subseteq T$, $\bF(T)=\bF(G)$. Since $T \nor G$, $p^{a+1}$ does not divide $\lambda(1)$ for all $\lambda \in \Irr(T)$. Thus by ~\cite[Remark of Corollary 5.3]{YY5}, $|T: \bF(G)|_p\leq p^{2.5a}$.

Let $\tilde G=G/T$ and $\bar G=\tilde G / F^*(\tilde G)$. It is clear that $F^*(\tilde G)$ is a direct product of finite non-abelian simple groups. By Proposition ~\ref{prop4}(1), there exists $v \in \Irr(F^*(\tilde G))$, $N \nor \bar G$ where $N \subseteq \bF_2(\bar G)$ such that for any $P \in \Syl_p(\bar G)$, we have $\bC_P(v) \subseteq N$, and the Sylow $p$-subgroup of $N \bF(\bar G)/\bF(\bar G)$ is abelian. It is clear that we may find $\tilde \gamma \in \Irr(\tilde G)$ such that $|\bar G: N|_{p}$ divides $\tilde \gamma(1)$.

Let $P/\bF(\bar G)$ be a Sylow $p$-subgroup of $N \bF(\bar G)/\bF(\bar G)$. Where $Y = O_{p'}(\bF(\bar G))$, observe that $W = \Irr(Y/\Phi(Y))$ is a faithful and completely reducible $P/\bF(\bar G)$-module. By Gow's regular orbit theorem ~\cite[2.6]{GOW}, we have $\mu \in W$ such that $\bC_P(\mu) = \bF(\bar G)$. We may view $\mu$ as a character of the preimage $X$ of $Y$ in $\bar G$. Take $\bar \alpha \in \Irr(\bar G)$ lying over $\mu$, and $\bar \alpha$ lies over an irreducible character $\bar \psi$ of $P$ lying over $\mu$. Clearly, $\bar \psi(1)_p \ge |N \bF(\bar G)/\bF(\bar G)|_p$. As $P$ is normal in $G$, we have $\bar \alpha(1)_p \ge \bar \psi(1)_p \ge |N \bF(\bar G)/\bF(\bar G)|_p$.

Let $P_1$ be the Sylow $p$-subgroup of $\bF(\bar G) \cap N$. By Lemma ~\ref{simplecoprime}, we may find $\nu \in \Irr(F^*(\tilde G))$ such that $\bC_{P_1}(\nu)=1$. Thus by a similar argument as before, we may find an irreducible character $\tilde \beta$ of $\tilde G$ such that $\tilde \beta(1)_p \geq |N \cap \bF(\bar G)|_p$.

Thus $|G:\bF(G)|_p \leq p^{5.5a}$. If $P \in \Syl_p(G)$, then $b(P) \leq |P: O_p(G)||b(O_p(G))|=|G:\bF(G)|_p |b(O_p(G))| \leq p^{5.5a}p^a=p^{6.5a}$.

Now, we want to prove the last part of the statement. By ~\cite[Theorem 12.26]{Isaacs/book} and the nilpotency of $P$, we have that $P$ has an abelian subgroup $B$ of index at most $b(P)^4$. By ~\cite[Theorem 5.1]{Podoski}, we deduce that $P$ has a normal abelian subgroup $A$ of index at most $|P:B|^2$. Thus, $|P:A| \leq |P:B|^2 \leq b(P)^{8s}$, where $b(P)=p^s$. By ~\cite[Satz III.2.12]{Huppert1}, $\dl(P/A) \leq 1+\log_2(8s)$ and so $\dl(P) \leq 2+ \log_2(8s)=5+\log_2(s)$. Since $s$ is at most $6.5a$, the result follows.
\end{proof}


We now state the conjugacy analogs of Theorem ~\ref{chardegreepsolvablebound}. Given a group $G$, we write $b^*(G)$ to denote the largest size of the conjugacy classes of $G$. 
\begin{theorem}\label{conjugacyboundpsolvable}
Let $G$ be a $p$-solvable group where $p$ is a prime and $p \geq 5$. Suppose that $p^{a+1}$ does not divide $|C|$ for all $C \in \cl(G)$ and let $P \in \Syl_p(G)$, then $|G: \bF(G)|_p \leq p^{5.5a}$, $b^*(P)\leq p^{6.5a}$ and $|P'| \leq p^{6.5a(6.5a+1)/2}$.
\end{theorem}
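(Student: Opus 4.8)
The plan is to mirror the structure of the proof of Theorem \ref{chardegreepsolvablebound}, replacing character degrees by conjugacy class sizes and invoking the "dual" orbit and reduction results wherever they are available. First I would observe that the hypothesis is inherited by normal subgroups and quotients in the appropriate sense: if $N \nor G$ then class sizes of $N$ and of $G/N$ divide class sizes of $G$ (the latter up to the usual caveats), so $p^{a+1}$ bounds the $p$-part of every class size in every section of $G$ that we will need. I would set $T = O_\infty(G)$, note $\bF(T) = \bF(G)$, and apply the conjugacy-class analogue of \cite[Remark of Corollary 5.3]{YY5} (the dual statement, which should give $|T : \bF(G)|_p \le p^{2.5a}$ exactly as in the character case — this is the point where I would need to check that the cited source, or its conjugacy companion, actually supplies the $2.5a$ bound for class sizes of $p$-solvable groups). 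Then with $\tilde G = G/T$ and $\bar G = \tilde G / F^*(\tilde G)$, I would run Proposition \ref{prop4}(1) together with the Gow regular-orbit argument and Lemma \ref{simplecoprime} exactly as before, but now producing \emph{elements} of $F^*(\tilde G)$ (or of the relevant coprime module) whose centralizers are small, so that the corresponding conjugacy classes of $\tilde G$ have $p$-part at least $|\bar G : N|_p$ and $|N \cap \bF(\bar G)|_p$ respectively. Summing the exponents gives $|G : \bF(G)|_p \le p^{5.5a}$.

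For the bound $b^*(P) \le p^{6.5a}$: since $O_p(G) \le P$ is normal in $G$ and $P/O_p(G)$ embeds in $G/\bF(G)$ on the relevant $p$-section, any class of $P$ has size at most $|P : O_p(G)| \cdot b^*(O_p(G)) \le |G:\bF(G)|_p \cdot p^a \le p^{5.5a} p^a = p^{6.5a}$, using that the $p$-part hypothesis forces $b^*(O_p(G)) \le p^a$ because $O_p(G)$ is a $p$-group and its class sizes are powers of $p$ dividing class sizes of $G$.

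Finally, for $|P'| \le p^{6.5a(6.5a+1)/2}$: here I would use the standard fact that for a $p$-group $P$ with $b^*(P) = p^s$, one has $|P'| \le p^{s(s+1)/2}$ — this follows from Vaughan-Lee / Mann type bounds on the derived subgroup of a group in terms of the maximal class size (a class-$2$-step argument: the commutator map is controlled by $\bC_P$ of generators, and iterating gives the triangular-number exponent). With $s \le 6.5a$ this yields the claimed bound directly. The main obstacle I expect is not any of these final manipulations but rather step one: confirming that the conjugacy-class analogue of the $|T:\bF(G)|_p \le p^{2.5a}$ estimate for $p$-solvable groups is genuinely available in the form needed (the character-degree version rests on Moretó--Wolf-style machinery, and one must be sure the conjugacy side has been carried through with the same constant), since everything downstream is a faithful translation of the character argument via Gow's theorem and Lemma \ref{simplecoprime}, which are themselves stated for $\Irr$ but have well-known orbit-theoretic counterparts for the conjugation action.
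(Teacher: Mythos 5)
Your proposal follows essentially the same route as the paper: the index bound $|G:\bF(G)|_p\le p^{5.5a}$ is obtained by rerunning the proof of Theorem \ref{chardegreepsolvablebound} with conjugacy-class analogues of the cited orbit results (which is all the paper itself does), the bound $b^*(P)\le p^{6.5a}$ comes from $|P:\bC_P(x)|\le |P:O_p(G)|\,|O_p(G):\bC_{O_p(G)}(x)|$ with the second factor at most $p^a$, and $|P'|$ is bounded by Vaughan--Lee's theorem exactly as in the paper. One small repair: for $x\in P\setminus O_p(G)$ the factor $|O_p(G):\bC_{O_p(G)}(x)|$ is not a class size of $O_p(G)$, so it should be bounded not by $b^*(O_p(G))$ but by observing that $\bC_{O_p(G)}(x)=O_p(G)\cap\bC_G(x)$, whence $|O_p(G):\bC_{O_p(G)}(x)|$ divides $|G:\bC_G(x)|$ and is therefore at most $p^a$, which is how the paper argues.
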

\begin{proof}
The proof of the first statement goes similarly as the previous one. Write $N=O_p(G)$. It is clear that $|N: \bC_N(x)|$ divides $|G: \bC_G(x)|$ for all $x \in G$. Thus, if we take $x \in P$ we have that
\[|\cl_P(x)|= |P: \bC_P(x)| \leq |P:N||N: \bC_N(x)| \leq p^{5.5a} p^a=p^{6.5a}\]
 Finally, to obtain the bounds for the order of $P'$ is suffices to apply a theorem of Vaughan-Lee \cite[Theorem VIII.9.12]{Huppert2}.
\end{proof}

\begin{theorem}\label{correction}
If $G$ is solvable, there exists a product $\theta=\chi_1 \dots \chi_t$ of distinct irreducible characters $\chi_i$ of $G$ such that $|G:\bF(G)|$ divides $\theta(1)$ and $t \leq 15.$ Furthermore, if $|\bF_8(G)|$ is odd then we can take $t \leq 3$ and if $|G|$ is odd we can take $t \leq 2$.
\end{theorem}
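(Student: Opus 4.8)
\emph{Proof proposal.} The plan is to translate the divisibility statement into an orbit problem for the faithful completely reducible action of $G/\bF(G)$ on $\bF(G)$, and then feed in the known orbit theorems for solvable linear groups: Gow's regular orbit theorem \cite{GOW}, the odd-order orbit theorem behind \cite[Theorem 3.1]{AE1}, and the small-characteristic analogues. First I would reduce to $\Phi(G)=1$: passing from $G$ to $G/\Phi(G)$ does not change $|G:\bF(G)|$ (since $\Phi(G)\le\bF(G)$ and $\bF_i(G/\Phi(G))=\bF_i(G)/\Phi(G)$ for all $i$), it only shrinks $\Irr$, and it preserves both extra hypotheses, because if $|\bF_8(G)|$ is odd then $\Phi(G)$ is odd and hence $|\bF_8(G/\Phi(G))|$ is odd. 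So assume $\Phi(G)=1$. Then $\bF:=\bF(G)$ is abelian and complemented, $C:=G/\bF$ acts faithfully and completely reducibly on $V:=\bF$, and $\hat V:=\Irr(V)$ is again a faithful completely reducible $C$-module. Since $V$ is abelian it centralizes $\hat V$, so for $\lambda\in\hat V$ the inertia group is $I_G(\lambda)=V\rtimes\bC_C(\lambda)$, whence $[G:I_G(\lambda)]=[C:\bC_C(\lambda)]$.

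The next step is to reduce the theorem to the assertion: there exist $\lambda_1,\dots,\lambda_t\in\hat V$, with $t\le 15$ in general, $t\le 3$ if $|\bF_8(G)|$ is odd, and $t\le 2$ if $|G|$ is odd, lying in pairwise distinct nontrivial $C$-orbits, with $\bigcap_{i=1}^{t}\bC_C(\lambda_i)=1$. Granting this, pick $\chi_i\in\Irr(G\mid\lambda_i)$; by Clifford's theorem $[C:\bC_C(\lambda_i)]=[G:I_G(\lambda_i)]$ divides $\chi_i(1)$, and since $[C:\bigcap_i\bC_C(\lambda_i)]$ always divides $\prod_i[C:\bC_C(\lambda_i)]$, the index $|C|=|G:\bF(G)|$ divides $\theta(1)=\prod_i\chi_i(1)$ with $\theta=\chi_1\cdots\chi_t$; finally $(\chi_i)_V$ is supported on the $C$-orbit of $\lambda_i$, so distinct orbits force the $\chi_i$ to be distinct. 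The one place this clean picture can fail is when the orbit theorem is forced to use two points from a single orbit (this occurs only when $C$ is transitive on the nonzero vectors of some homogeneous component of $\hat V$); there the relevant $\lambda$ has $\bC_C(\lambda)\ne 1$, and since $I_G(\lambda)=V\rtimes\bC_C(\lambda)$ splits, $\lambda$ extends to a linear character of $I_G(\lambda)$, so by Gallagher's theorem $|\Irr(G\mid\lambda)|=|\Irr(\bC_C(\lambda))|\ge 2$ and one may still choose the $\chi_i$ distinct; if instead $\bC_C(\lambda)=1$ a single vector from that part already suffices.

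It then remains to prove the orbit theorem with these constants. I would decompose $\hat V$ according to the characteristic of its composition factors, $\hat V=\hat V_{\{2,3\}'}\oplus\hat V_3\oplus\hat V_2$. On $\hat V_{\{2,3\}'}$ (and, more generally, whenever the acting section has odd order) one applies Gow's regular orbit theorem \cite[2.6]{GOW} together with the odd-order orbit theorem of \cite[Theorem 3.1]{AE1}, each of which yields a regular orbit outside a short list of small exceptions, so only a couple of vectors are needed there. On $\hat V_3$ and $\hat V_2$ the available orbit theorems are weaker — regular orbits fail more often, and one must invoke the classification of solvable linear groups in characteristics $3$ and $2$ that have no regular orbit — and these two pieces account for the bulk of the bound; a careful accounting across the three parts gives $t\le 15$, sharpening the value $19$ underlying \cite[Corollary B]{MOWOLF} by using the improved small-characteristic estimates. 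When $|G|$ is odd, $V=\bF$ and $C$ both have odd order and only the odd-order orbit theorem is in play, giving $t\le 2$. When $|\bF_8(G)|$ is odd, the same odd-order argument can be carried out on a section reaching eight Fitting steps into $G$, which controls everything except a single top $\{2,3\}$-layer, and one obtains $t\le 3$.

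\emph{Main obstacle.} The delicate part is establishing the orbit theorem with precisely these constants: this is a multi-case argument resting on Gow's theorem, the odd-order orbit theorem, and the classification of solvable linear groups in characteristics $2$ and $3$ without regular orbits, combined with the bookkeeping needed to keep the constructed characters distinct (the repeated-orbit/Gallagher device above) and to see exactly how the "$|\bF_8(G)|$ odd" hypothesis forces the count down to $3$. By comparison, the reduction to $\Phi(G)=1$ and the passage from orbits to character degrees are routine.
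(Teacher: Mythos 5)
Note first that the paper does not actually reprove this statement: its ``proof'' is a citation of \cite[Theorem 5.1]{YY1} together with a correction of the constant from $19$ to $15$, so all of the quantitative content lives in that reference. Your proposal tries to rebuild the argument from scratch, and it has a genuine gap already at the reduction step. You reduce the theorem to finding $\lambda_1,\dots,\lambda_t\in\Irr(\bF(G))$ in distinct orbits with $\bigcap_i\bC_C(\lambda_i)=1$, and you justify the divisibility by the claim that $[C:\bigcap_i\bC_C(\lambda_i)]$ always divides $\prod_i[C:\bC_C(\lambda_i)]$. That claim is false: for $C=S_3$ and the two stabilizers $\langle(12)\rangle$, $\langle(13)\rangle$ the intersection is trivial, yet $6\nmid 9$. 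The failure is not an artifact of bad bookkeeping. Take $G=S_4$, so $V=\bF(G)$ is the Klein four-group, $C=G/V\cong S_3$ acts transitively on the three nontrivial $\lambda\in\Irr(V)$, and every $\chi\in\Irr(G)$ lying over a nontrivial $\lambda$ has degree $3$; hence no product of degrees of such characters is divisible by $|G:\bF(G)|=6$, and your Gallagher patch for the single-orbit case (two distinct degree-$3$ characters, product $9$) fails as well. The theorem holds for $S_4$ only because one uses the degree-$2$ character, which lies over the \emph{trivial} character of $V$, i.e.\ comes from the quotient $G/\bF(G)$. So trivially intersecting stabilizers on $\Irr(\bF(G))$ cannot by themselves deliver the divisibility $|G:\bF(G)|\mid\theta(1)$; the genuine argument (in \cite{YY1}, and in the related results of Moret\'o--Wolf) has to work through the Fitting series and choose characters so that each degree captured is an honest divisor of the corresponding layer of $|G:\bF(G)|$, which is a different and more delicate induction than the one you set up.

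Second, even granting a correct reduction, the substance of the theorem is the explicit constants $15$, $3$ and $2$, and your proposal defers exactly this (``a careful accounting across the three parts gives $t\le 15$'') to the orbit-theoretic case analysis that constitutes the cited paper; as written this is a statement of intent rather than a proof. Related smaller inaccuracies point the same way: Gow's regular orbit theorem applies to odd-order (coprime) actions, Espuelas' theorem produces a large orbit rather than a regular one, and neither immediately yields $t\le 2$ for $|G|$ odd, nor does your sketch explain how the hypothesis that $|\bF_8(G)|$ is odd forces $t\le 3$. The preliminary reductions (to $\Phi(G)=1$, and the Clifford-theoretic bookkeeping for distinctness) are fine, but the two essential ingredients --- a correct divisibility mechanism and the quantitative orbit analysis --- are respectively incorrect and missing.
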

\begin{proof}
This is ~\cite[Theorem 5.1]{YY1}. The statement in that paper should be $t \leq 15$ instead of $t \leq 19$, and we take the opportunity to correct it here.
\end{proof}

\begin{theorem}\label{correctionconj}
If $G$ is solvable, there exists conjugacy classes $C_1, \cdots, C_t$ such that $|G:\bF(G)|$ divides $|C_1| \cdots |C_t|$ and $t \leq 15 $. Furthermore, if $|\bF_8(G)|$ is odd then we can take $t \leq 3$ and if $|G|$ is odd we can take $t \leq 2$.
\end{theorem}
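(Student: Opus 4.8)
The plan is to mirror the proof of Theorem~\ref{correction} almost verbatim, replacing character degrees by conjugacy class sizes throughout. The underlying structural result in \cite{YY1} is an orbit/product statement about the action of a solvable group on modules along its Fitting series; the arithmetic of $|G:\bF(G)|$ is the same whether one records it via irreducible character degrees or via class sizes, because in both cases one builds the relevant prime-power divisors from actions of chief factors of $G$ on the abelian sections inside $\bF(G)$ and on the sections $\bF_{i+1}/\bF_i$. Concretely, I would run the induction on the Fitting height of $G$: one knows (Moret\'o--Wolf type arguments, or directly \cite[Theorem 5.1]{YY1}) that there is a bounded number $t\le 15$ of "ingredients'' whose product is divisible by $|G:\bF(G)|$, and each ingredient can be realized as a class size rather than a character degree by the standard duality: if $M\nor G$ is abelian and $G/M$ acts on $M$, then for a suitable $v\in M$ the orbit size $|G:\bC_G(v)|$ plays exactly the role that $|G:\bC_G(\mu)|$ for $\mu\in\Irr(M)$ plays on the character side, since $\Irr(M)$ is $G$-isomorphic (as a $G/C_G(M)$-set, up to the usual Brauer permutation lemma bookkeeping) to $M$ itself.

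First I would reduce, exactly as in \cite{YY1}, to the case $\bF(G)=\opp[G]=O_{p'}(G)$ is handled uniformly over primes, and set up the ascending Fitting series $\bF_0\le\bF_1\le\cdots\le\bF_n=G$. Then, proceeding up the series, at each stage I would use Gow's regular orbit theorem \cite[2.6]{GOW} (for the $p\ge 5$, or more precisely the coprime/large-characteristic, contributions) and the odd-order orbit theorems \cite[Theorem 3.1]{AE1} where the parity hypotheses apply, to produce an element $v_i$ in the relevant abelian section whose centralizer in the acting group is as small as the character-theoretic argument demands. Each such $v_i$ contributes a conjugacy class $C_i$ with $|C_i|$ divisible by the index of that centralizer, and the bookkeeping that shows the product of these indices is divisible by $|G:\bF(G)|$ is identical to the character case. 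The count $t\le 15$, and the improvements $t\le 3$ when $|\bF_8(G)|$ is odd and $t\le 2$ when $|G|$ is odd, come over unchanged because they are statements about how many Fitting-series "layers'' one must traverse, not about which invariant records a layer.

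The one genuine subtlety — and what I expect to be the main obstacle — is the passage between an action on an abelian normal section $M$ and an action on $\Irr(M)$. On the character side one works directly with $\Irr(M)$; on the conjugacy side one wants a single group element, so one must be careful that the element $v\in M$ can be chosen normal-in-$G$-invariantly enough that $|G:\bC_G(v)|$ really is a class size of $G$ and not merely of a section, and that when $M$ is elementary abelian the orbit theorems applied to $M$ (rather than to its dual) still yield the small-centralizer conclusion. For coprime actions $M$ and $\Irr(M)$ are $\bC$-isomorphic as modules, so this is automatic; the care is needed only at the prime $p$ itself, and there one argues as in the proof of Theorem~\ref{conjugacyboundpsolvable} above, using $|N:\bC_N(x)|$ divides $|G:\bC_G(x)|$ for $N=O_p(G)$ to transfer information from $O_p(G)$ to $G$. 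I would therefore organize the proof as: (i) quote \cite[Theorem 5.1]{YY1} and extract its internal orbit-product structure; (ii) state a lemma that each character-degree ingredient in that structure has a conjugacy-class-size counterpart, proved by the duality above together with Gow's and the odd-order orbit theorems; (iii) assemble the $C_i$ and conclude with the same $t\le 15$, $t\le 3$, $t\le 2$ bounds. The details beyond this are routine translation.
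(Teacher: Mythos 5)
Your approach matches the paper's: the paper's entire proof of Theorem~\ref{correctionconj} is the one-line assertion that it is the conjugacy-class version of Theorem~\ref{correction}, i.e.\ precisely the routine translation of the orbit-theoretic proof of \cite[Theorem 5.1]{YY1} that you outline (orbit sizes on $\bF(G)/\Phi(\bF(G))$ divide class sizes of $G$ since $\bC_G(v)$ maps into the stabilizer of the image of $v$, and class sizes of quotients divide class sizes of $G$), so the bounds $t\leq 15$, $3$, $2$ carry over. One small repair: you neither need nor should claim any $G$-isomorphism between $M$ and $\Irr(M)$ (Brauer's permutation lemma gives only equal numbers of orbits and fixed points, not equivalent actions, and ``$\bC$-isomorphic'' is not the right notion here); the translation works simply because the orbit theorems invoked in \cite{YY1}, \cite{GOW}, \cite{AE1} hold for every faithful completely reducible module, so one applies them to $\bF(G)/\Phi(\bF(G))$ itself rather than to its dual.
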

\begin{proof}
This is the conjugacy class version of Theorem ~\ref{correction}.
\end{proof}

For the case $p=3$, we have the following result.

\begin{theorem}\label{chardegreeboundnew3}
Let $G$ be a $p$-solvable group where $p = 3$. Suppose that $p^{a+1}$ does not divide $\chi(1)$ for all $\chi \in \Irr(G)$ and let $P \in \Syl_p(G)$, then $|G: \bF(G)|_p\leq p^{20a}$, $b(P)\leq p^{21a}$ and $\dl(P) \leq 5 + \log_2 a + \log_2 21$.
\end{theorem}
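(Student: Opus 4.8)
The plan is to follow the proof of Theorem~\ref{chardegreepsolvablebound} essentially line by line, replacing each ingredient special to $p\ge5$ by its $p=3$ counterpart. As there, set $T=O_\infty(G)$; then $T\nor G$ is solvable, $\bF(T)=\bF(G)$, and since $p^{a+1}\nmid\chi(1)$ for all $\chi\in\Irr(G)$ the same holds for all $\lambda\in\Irr(T)$. The only genuinely $p$-specific input used when $p\ge5$ was the bound $|T:\bF(G)|_p\le p^{2.5a}$; in its place I would apply Theorem~\ref{correction} to $T$, obtaining a product of at most $15$ distinct irreducible characters of $T$ of degree divisible by $|T:\bF(T)|=|T:\bF(G)|$, so that $|T:\bF(G)|_p\le p^{15a}$. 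Since $|G:\bF(G)|_p=|T:\bF(G)|_p\cdot|G/T|_p$, it then remains to bound $|G/T|_p$ by $p^{5a}$.

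Put $\tilde G=G/T$ and $\bar G=\tilde G/F^*(\tilde G)$. Since $\tilde G$ is $p$-solvable with $O_\infty(\tilde G)=1$, the subgroup $F^*(\tilde G)=S_1\times\cdots\times S_m$ is a direct product of non-abelian simple groups permuted by $\tilde G$, the quotient $\bar G$ embeds in $\Out(F^*(\tilde G))$, and $|G/T|_p=|F^*(\tilde G)|_p\,|\bar G|_p$. Apply Proposition~\ref{prop4}(2): there are $v\in\Irr(F^*(\tilde G))$ and $N\nor\bar G$ with $N\subseteq\bF_3(\bar G)$ such that $\bC_P(v)\subseteq N$ for every $P\in\Syl_p(\bar G)$, with the Sylow $p$-subgroup of $N\bF_2(\bar G)/\bF_2(\bar G)$ abelian. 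Using the subgroup chain $N\cap\bF(\bar G)\le N\cap\bF_2(\bar G)\le N\le\bar G$,
\[
|\bar G|_p=|\bar G:N|_p\cdot|N:N\cap\bF_2(\bar G)|_p\cdot|N\cap\bF_2(\bar G):N\cap\bF(\bar G)|_p\cdot|N\cap\bF(\bar G)|_p,
\]
and I would bound each of these four factors, and separately $|F^*(\tilde G)|_p$, by producing an irreducible character of $\tilde G$ (which inflates to $G$, hence has $p$-part at most $p^a$) of the required $p$-part. The factor $|\bar G:N|_p$ — and, by taking the character over $F^*(\tilde G)$ to have largest possible $p$-part, the bulk of $|F^*(\tilde G)|_p$ with it — is captured by a Clifford constituent $\tilde\gamma\in\Irr(\tilde G)$ lying over that character, exactly as in Theorem~\ref{chardegreepsolvablebound}; here one uses $e_p(F^*(\tilde G))=\sum_{i}e_p(S_i)\le e_p(G)$ together with, for those $S_i$ that (when $p=3$) possess no block of defect $0$, a standard estimate of the shape $|S_i|_p\le p^{c_0 e_p(S_i)}$. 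The factor $|N\bF_2(\bar G)/\bF_2(\bar G)|_p$ is captured by Gow's regular orbit theorem~\cite[2.6]{GOW} applied to a suitable faithful coprime section of $\bF_2(\bar G)$, the acting Sylow $p$-subgroup being abelian. The factor $|N\cap\bF(\bar G)|_p$ is captured exactly as for $p\ge5$: by Lemma~\ref{simplecoprime} there is $\nu\in\Irr(F^*(\tilde G))$ with $\bC_{P_1}(\nu)=1$ for $P_1\in\Syl_p(N\cap\bF(\bar G))$, which yields $\tilde\beta\in\Irr(\tilde G)$ with $\tilde\beta(1)_p\ge|N\cap\bF(\bar G)|_p$.

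The genuinely new estimate — and the step I expect to be the main obstacle — is the remaining factor $|N\cap\bF_2(\bar G):N\cap\bF(\bar G)|_p=|(N\cap\bF_2(\bar G))\bF(\bar G)/\bF(\bar G)|_p$, a $p$-section of the nilpotent group $\bF_2(\bar G)/\bF(\bar G)=\bF(\bar G/\bF(\bar G))$. It has no counterpart when $p\ge5$: Proposition~\ref{prop3}(1) then places the $p$-centralizers already inside $\bF_2$ with the relevant Sylow subgroup abelian modulo $\bF_1$, whereas for $p=3$ the possible $S_3$-quotient of $\Out(S)$ (Lemma~\ref{simpleouter}) forces this extra Fitting layer, on which one can assume neither that the acting $p$-group is abelian nor that it acts coprimely on the module at hand, so a single clean regular-orbit argument no longer applies. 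I would instead iterate the regular-orbit machinery over two levels — a coprime regular orbit of the relevant $p$-group on a suitable quotient of $O_{p'}(\bF(\bar G))$, together with control of its induced action on the $p$-part of $\bF(\bar G)$ (or directly on $\bF_2(\bar G)$) — each level contributing one further character of $\tilde G$ of $p$-part at most $p^a$, while tracking carefully how the Sylow $p$-subgroups stack through $\bF(\bar G)\le\bF_2(\bar G)\le\bF_3(\bar G)$. Summing the factors then gives $|G/T|_p\le p^{5a}$, hence $|G:\bF(G)|_p\le p^{20a}$.

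The remaining assertions follow as in Theorem~\ref{chardegreepsolvablebound}. Since $O_p(G)\nor G$, every $\psi\in\Irr(O_p(G))$ has $\psi(1)=\psi(1)_p\le p^a$, so $b(P)\le|P:O_p(G)|\,b(O_p(G))=|G:\bF(G)|_p\,b(O_p(G))\le p^{20a}p^a=p^{21a}$. Writing $b(P)=p^s$ with $s\le 21a$: by~\cite[Theorem 12.26]{Isaacs/book} and nilpotency $P$ has an abelian subgroup of index at most $b(P)^4$; by~\cite[Theorem 5.1]{Podoski} $P$ then has a normal abelian subgroup $A$ with $|P:A|\le b(P)^8=p^{8s}$; and by~\cite[Satz III.2.12]{Huppert1}, $\dl(P/A)\le 1+\log_2(8s)$, so that $\dl(P)\le 2+\log_2(8s)=5+\log_2 s\le 5+\log_2 a+\log_2 21$.
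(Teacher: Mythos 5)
Your overall route is the paper's intended one (reduce to $T=O_\infty(G)$ and apply Theorem~\ref{correction} to get $|T:\bF(G)|_p\le p^{15a}$, then run the argument of Theorem~\ref{chardegreepsolvablebound} on $\tilde G=G/T$ with Proposition~\ref{prop4}(2) in place of Proposition~\ref{prop4}(1)), but two points need repair. First, your treatment of $|F^*(\tilde G)|_p$ is off target: since $G$ is $p$-solvable, every nonabelian composition factor is a $p'$-group, so the simple factors $S_i$ of $F^*(\tilde G)$ have order coprime to $p$ and $|F^*(\tilde G)|_p=1$. There is no need for $3$-blocks of defect zero or an estimate $|S_i|_p\le p^{c_0e_p(S_i)}$ (and had such a factor really been present, your unspecified constant $c_0$ would not obviously fit inside the $p^{5a}$ budget you claim for $|G/T|_p$). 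This coprimality is not a side remark: it is exactly what makes Lemma~\ref{simplecoprime} applicable to $P_1$ acting on $\Irr(F^*(\tilde G))$ and what makes the modules in the Gow step coprime, so missing it suggests the mechanism of the proof was not fully absorbed.

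Second, and more seriously, the one genuinely new estimate for $p=3$ --- the factor $|(N\cap\bF_2(\bar G))\bF(\bar G)/\bF(\bar G)|_p$ coming from the extra Fitting layer forced by the possible $S_3$-quotient in Lemma~\ref{simpleouter} --- is left as a plan, not a proof, and the plan as stated would not go through: Gow's theorem \cite[2.6]{GOW} requires an \emph{abelian} acting group, which this layer need not be, and "control of its induced action on the $p$-part of $\bF(\bar G)$" has no coprime, completely reducible structure to exploit and no mechanism is given for converting it into a character of $\tilde G$ of large $p$-part. The workable argument is rather: the $p$-part of this layer is a normal $p$-subgroup of $\bar G/\bF(\bar G)$ acting faithfully (a $\bC_{\bF_2}(\cdot)$-type argument), coprimely and completely reducibly on $W=\Irr(Y/\Phi(Y))$ with $Y=O_{p'}(\bF(\bar G))$, and one then invokes a large-orbit theorem (an orbit of size at least the square root of the group order, as in \cite{MOWOLF}) rather than a regular orbit; this costs $2a$ and is what makes the paper's accounting $15a+a+2a+a+a=20a$ come out, whereas a regular-orbit count would give $19a$. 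As written, your proposal has a genuine gap at precisely this step, which you yourself flag as the main obstacle.
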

\begin{proof}
The proof is similar to the proof of Theorem ~\ref{chardegreepsolvablebound} but using Theorem ~\ref{correction} instead of ~\cite[Remark of Corollary 5.3]{YY5}, and Proposition ~\ref{prop4}(2) instead of Proposition ~\ref{prop4}(1).
\end{proof}

\begin{theorem}\label{conjugacybound3}
Let $G$ be a $p$-solvable group where $p = 3$. Suppose that $p^{a+1}$ does not divide $|C|$ for all $C \in \cl(G)$ and let $P \in \Syl_p(G)$, then $|G: \bF(G)|_p \leq p^{20a}$, $b^*(P)\leq p^{21a}$ and $|P'| \leq p^{21a(21a+1)/2}$.
\end{theorem}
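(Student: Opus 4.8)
The plan is to transcribe the proof of Theorem~\ref{conjugacyboundpsolvable} line for line, making exactly the two substitutions by which Theorem~\ref{chardegreeboundnew3} is obtained from Theorem~\ref{chardegreepsolvablebound}: in the step bounding the $p$-part of $|O_\infty(G):\bF(G)|$ one uses the conjugacy-class statement Theorem~\ref{correctionconj} in place of the $p\ge 5$ solvable bound, and in the step handling the almost-simple top quotient one uses Proposition~\ref{prop4}(2) in place of Proposition~\ref{prop4}(1). Everything else in that proof involves only $p$-group arithmetic and orbit statements that are insensitive to whether one counts characters or classes.

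Concretely, I would first establish $|G:\bF(G)|_3\le 3^{20a}$. Put $T=O_\infty(G)$; then $\bF(T)=\bF(G)$, and since $T\nor G$ and $|T:\bC_T(x)|$ divides $|G:\bC_G(x)|$ for all $x\in T$, every class size of $T$ has $3$-part at most $3^a$. Applying Theorem~\ref{correctionconj} to the solvable group $T$ produces classes $C_1,\dots,C_t$ of $T$ with $t\le 15$ and $|T:\bF(G)|$ dividing $|C_1|\cdots|C_t|$, so $|T:\bF(G)|_3\le 3^{15a}$. Passing to $\tilde G=G/T$ and $\bar G=\tilde G/F^*(\tilde G)$, with $F^*(\tilde G)$ a direct product of nonabelian simple groups, Proposition~\ref{prop4}(2) supplies $N\nor\bar G$ with $N\subseteq\bF_3(\bar G)$ and an element of $F^*(\tilde G)$ whose $\Syl_3(\bar G)$-centraliser lies in $N$, the Sylow $3$-subgroup of $N\bF_2(\bar G)/\bF_2(\bar G)$ being abelian. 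Using the conjugacy-class forms of Clifford correspondence and of Gow's regular orbit theorem (the latter applied directly to the $3'$-part of the relevant Fitting layer, regarded as a module), together with the conjugacy-class version of Lemma~\ref{simplecoprime}, one realises $|\bar G:N|_3$ and the $3$-parts of the successive Fitting layers of $N$ as $3$-parts of class sizes of $G$; each such factor is thus $\le 3^a$, and by the same accounting as in Theorem~\ref{chardegreeboundnew3} these contribute at most $3^{5a}$, giving $|G:\bF(G)|_3\le 3^{20a}$. The two remaining assertions then follow immediately as in Theorem~\ref{conjugacyboundpsolvable}: with $N=O_3(G)$ and $x\in P$ one has $|\cl_P(x)|\le|P:N|\,|N:\bC_N(x)|\le 3^{20a}\cdot 3^a=3^{21a}$, so $b^*(P)\le 3^{21a}$, and then Vaughan-Lee's theorem \cite[Theorem VIII.9.12]{Huppert2} applied to the $3$-group $P$ yields $|P'|\le 3^{21a(21a+1)/2}$.

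The point requiring genuine care is not any of the displayed arithmetic but the legitimacy of the ``character $\leftrightarrow$ class'' dictionary already used tacitly in Theorem~\ref{conjugacyboundpsolvable}: one needs the conjugacy-class analogue of Lemma~\ref{simplecoprime} (a coprimely acting group has at least two regular orbits on the conjugacy classes of a nonabelian simple group) and of Lemma~\ref{chardegsimple} (a nonabelian simple group has at least four distinct class sizes), as well as Proposition~\ref{prop4}(2) read with $\Irr$ of each simple factor replaced by its set of conjugacy classes. The last replacement is harmless because the orbit construction carried out in the proof of Proposition~\ref{prop3} uses only that $\cl(S_1\times\cdots\times S_n)=\cl(S_1)\times\cdots\times\cl(S_n)$ with class sizes multiplying under the product and that each simple factor supplies at least four $\Out(S)$-orbits pairwise separated by a multiplicative size invariant --- all of which hold for conjugacy classes exactly as for irreducible characters. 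Once these analogues are in hand, the proof above is a routine transcription.
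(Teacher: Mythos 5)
Your proposal is correct and is essentially the paper's own proof, which simply declares the theorem to be the conjugacy-class version of Theorem~\ref{chardegreeboundnew3}: one runs the argument of Theorem~\ref{conjugacyboundpsolvable} with the two substitutions (Theorem~\ref{correctionconj} for the solvable radical, Proposition~\ref{prop4}(2) for the top quotient), then bounds $b^*(P)$ via $|P:O_3(G)|\,|O_3(G):\bC_{O_3(G)}(x)|\le 3^{21a}$ and invokes Vaughan-Lee for $|P'|$. The character-to-class dictionary you flag (regular orbits on $\cl(S)$, at least four distinct class sizes of a nonabelian simple group, the class form of Gow's theorem) is exactly what the paper leaves implicit as well.
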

\begin{proof}
This is the conjugacy class version of Theorem ~\ref{chardegreeboundnew3}.
\end{proof}

\begin{lemma} \label{simplepchar}
Let $S$ be a finite simple group and $p$ be a prime divisor of $|S|$.
\begin{enumerate}
\item If $p \geq 5$, then there exist $\chi \in \Irr(S)$ such that $v_p(|\Aut(S)|/\chi^2(1)) < 0$.
\item If $p=3$, then there exist $\chi \in \Irr(S)$ such that $v_p(|\Aut(S)|/\chi^3(1)) < 0$.
\item If $p=2$, then there exist $\chi \in \Irr(S)$ such that $v_p(|\Aut(S)|/\chi^5(1)) < 0$.
\end{enumerate}
\end{lemma}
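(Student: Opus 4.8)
The plan is to reduce the statement to a finite check on the possible isomorphism types of $S$, using the classification of finite simple groups. The key quantity is the "degree excess" at $p$, namely for $\chi \in \Irr(S)$ we want $v_p(\chi(1)^c) > v_p(|\Aut(S)|)$ where $c = 2, 3, 5$ in cases (1), (2), (3) respectively. Since $|\Aut(S)| = |S| \cdot |\Out(S)|$ and $|\Out(S)|_p$ is small (by Lemma \ref{simpleouter}, $\Out(S)$ has an abelian normal subgroup with cyclic-by-($S_2$ or $S_3$) quotient, so $|\Out(S)|_p$ is modest — in particular for $p \ge 5$ only the diagonal and field automorphisms contribute), the bulk of the estimate is to find $\chi$ with $v_p(\chi(1))$ large compared to $\tfrac1c v_p(|S|)$. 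Equivalently, writing $|S|_p = p^n$, I want an irreducible character of $S$ whose degree is divisible by $p^{\lceil n/c \rceil}$ (roughly), with a little room left over to absorb $|\Out(S)|_p$.

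First I would dispose of the alternating and sporadic groups, where this is a finite list: for $A_m$ one uses the hook-length formula / known large-degree characters, and for the $26$ sporadics one consults the character table; in both families $e_p$ is comfortably larger than $\tfrac1c \log_p|S|_p$ for the relevant small $c$. The main work is the groups of Lie type. Here I would invoke the Steinberg character, whose degree is exactly $|S|_p$, so $v_p(\mathrm{St}(1)) = n$ and $v_p(\mathrm{St}(1)^c) = cn > n \ge v_p(|S|_p)$ with lots of slack — this already gives the result whenever $|\Out(S)|_p \le p^{(c-1)n}$, which is essentially always true since $|\Out(S)|_p$ grows only like $\log|S|$ while $n$ grows like $\log|S|$ with a much better constant, and $c - 1 \ge 1$. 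So in the defining characteristic case the Steinberg character settles everything immediately, even with $c$ as small as $2$.

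The genuinely delicate case is a group of Lie type $S$ in characteristic $r \ne p$, where $p$ divides $|S|$ only through cyclotomic factors $\Phi_d(q)$ and $|S|_p$ can be as large as a substantial power of $p$ while individual character degrees need not be divisible by $p$ at all (for instance the Steinberg degree $q^N$ is now coprime to $p$). The strategy I expect to use is Lusztig/Deligne–Lusztig theory: choose a semisimple element $s$ in the dual group whose centralizer has order index divisible by a high power of $p$, giving a semisimple character of degree $|G^{*}:C_{G^*}(s)|_{r'}$; alternatively, use the known fact (Malle, and earlier work) that for $p \mid q^e - 1$ with $e$ the order of $q$ mod $p$, there are characters of degree divisible by essentially the full $p$-part of $|S|$ coming from generic tori / $d$-Harish-Chandra series. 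The arithmetic then amounts to comparing the $p$-adic valuation of a product of cyclotomic polynomials $\Phi_d(q)$ appearing in a character degree against that in $|S|$ and in $|\Out(S)|$; since $|\Out(S)|_p \le (\text{small constant}) \cdot v_p(q-1)$-type bounds and $c \ge 2$, there is room. I anticipate that the hardest sub-case is small-rank groups (e.g. $\PSL_2(q)$, $\PSU_3(q)$, Suzuki/Ree groups) at $p = 2$ and $p = 3$, where $|S|_p$ is not tiny, $|\Out(S)|_p$ can be relatively large, and the exponents $c = 3, 5$ were presumably chosen precisely to make these borderline cases work; these I would handle by explicit inspection of the (small) character tables. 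The main obstacle, then, is not any single hard idea but assembling a uniform argument in cross-characteristic that is tight enough for the small exponents $c$ — this is where I would lean most heavily on existing results about character degrees of groups of Lie type rather than re-deriving the needed estimates from scratch.
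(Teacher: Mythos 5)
The paper's own proof is a one-line citation of \cite[Lemma 2.1]{Gagola}, so the substance of the lemma lives in the literature; your proposal instead sketches a from-scratch CFSG argument, and the sketch misplaces where the real difficulty is. You dispose of the alternating groups as part of "a finite list" where the inequality holds "comfortably" — but the alternating groups are an infinite family and are precisely the critical case for $p=2,3$: unlike groups of Lie type they need not have $p$-defect zero characters, and for $A_7$ at $p=2$ the largest $2$-part of a character degree is $2^1$ while $|\Aut(A_7)|_2=2^4$, so the required inequality $v_2(\chi(1)^5)>v_2(|\Aut(S)|)$ reads $5>4$ and is essentially tight; the exponent $5$ in part (3) (and likewise $3$ in part (2)) is forced by such alternating/sporadic examples, not by small-rank Lie type groups as you conjecture. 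For general $A_m$ at $p=2,3$ one needs a genuine lower bound on the largest $p$-part of an irreducible character degree, of the order of $\tfrac1c v_p(|S_m|)$, which is nontrivial hook/$p$-quotient combinatorics; this is exactly the content you leave unverified, and it is the content the paper outsources to Gagola.

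Conversely, the case you flag as "genuinely delicate" — Lie type in cross characteristic — does not need Deligne--Lusztig constructions at all: by Michler and Willems (cited in the paper's introduction, and used in the paper's proof of Lemma~\ref{simplepconjugacy}) every simple group of Lie type has a $p$-block of defect zero for every prime $p$, hence a character with $\chi(1)_p=|S|_p$, and then already $c=2$ suffices once one checks $|S|_p>|\Out(S)|_p$ (a routine estimate, e.g.\ via lifting-the-exponent for the field-automorphism contribution). Two smaller points: your sufficient condition should be the strict inequality $|\Out(S)|_p<p^{(c-1)n}$, since the lemma demands $v_p<0$ rather than $\le 0$; and the sporadic groups do need a table check at $p=2,3$, where the margin can again be thin. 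So the overall architecture (Steinberg in defining characteristic, defect-zero characters in cross characteristic, explicit checks elsewhere) can be made to work, but as written the proposal has a genuine gap at the alternating groups for $p=2,3$, which is the very case the exponents $3$ and $5$ exist to accommodate.
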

\begin{proof}
This follows from ~\cite[Lemma 2.1]{Gagola}.
\end{proof}

\begin{lemma} \label{simplepconjugacy}
Let $S$ be a finite simple group and $p$ be a prime divisor of $|S|$.
\begin{enumerate}
\item If $p \geq 5$, then there exist $C \in \cl(S)$ such that $v_p(|\Aut(S)|/|C|^2) < 0$.
\item If $p=3$, then there exist $C \in \cl(S)$ such that $v_p(|\Aut(S)|/|C|^2) < 0$.
\item If $p=2$, then there exist $C \in \cl(S)$ such that $v_p(|\Aut(S)|/|C|^2) < 0$.
\end{enumerate}
\end{lemma}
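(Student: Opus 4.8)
The statement is the conjugacy class analogue of Lemma~\ref{simplepchar}, and the plan is to follow the argument of \cite[Lemma 2.1]{Gagola}, which adapts, or equivalently to give a direct case analysis over the classification of finite simple groups. Since $|\Aut(S)| = |S|\cdot|\Out(S)|$, the assertion $v_p(|\Aut(S)|/|C|^2) < 0$ is equivalent to
\[
2\,v_p(|C|) > v_p(|S|) + v_p(|\Out(S)|).
\]
By Lemma~\ref{simpleouter}, $v_p(|\Out(S)|)$ is small: it is $0$ when $S$ is alternating and $p$ is odd, it is at most $1$ when $S$ is sporadic or when $S$ is alternating, $p=2$ and $S \neq A_6$, and for $S$ of Lie type it is at most $v_p(d)+v_p(f)$, where $d$ is the order of the group of diagonal automorphisms and $q = r^f$ is the defining field size. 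Hence it will suffice to produce a class $C$ with $v_p(|C|) = v_p(|S|)$ --- equivalently an element $x \in S$ with $p \nmid |\bC_S(x)|$ --- together with the inequality $v_p(|\Out(S)|) < v_p(|S|)$, leaving only a short finite list of small groups to be examined individually.

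First I would dispatch the generic cases. If $S = A_n$ with $p \mid |S|$, pick $x \in A_n$ whose cycle type is a partition of $n$ into pairwise distinct parts, each coprime to $p$; then $|\bC_{A_n}(x)|$ divides the product of the cycle lengths of $x$ and so is prime to $p$, giving $v_p(|C|) = v_p(|A_n|)$, and one only needs $v_p(|\Out(A_n)|) < v_p(|A_n|)$, which holds whenever $p \mid |A_n|$ apart from a couple of small values (e.g.\ $A_4$, $A_6$) done by hand. If $S$ is of Lie type in its defining characteristic $p$, a regular semisimple element has centralizer a maximal torus, which is a $p'$-group, so $v_p(|C|) = v_p(|S|)$. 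If $S$ is of Lie type in cross characteristic $p$, the centralizer of a regular semisimple element is a maximal torus whose order is a product of cyclotomic values $\Phi_d(q)$; since $p \mid |S|$ is equivalent to $p \mid \Phi_e(q)$ for $e = \mathrm{ord}_p(q)$, it is enough to choose a maximal torus none of whose cyclotomic factors is divisible by $p$ (for instance a Coxeter torus), which is possible outside a bounded list of small-rank or small-field exceptions. If $S$ is sporadic, one reads off from the \textsc{Atlas} a class --- typically of an element of large prime order --- whose centralizer is a $p'$-group.

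It then remains to handle the finitely many exceptional groups: a few alternating groups, Lie type groups of small rank over small fields, and sporadic groups, in which the generic construction fails to force $v_p(|C|) = v_p(|S|)$ or in which $v_p(|\Out(S)|)$ is comparable to $v_p(|S|)$. For each of these one exhibits an explicit class $C$ and checks $2\,v_p(|C|) > v_p(|S|) + v_p(|\Out(S)|)$ by direct computation. I expect the cross-characteristic Lie type step to be the main obstacle: one must show, uniformly across all types, ranks and fields, that a maximal torus of order prime to $p$ exists, and pin down exactly which small groups are genuine exceptions; the rest is routine bookkeeping. Observe finally that the exponent demanded here is $2$ in all three cases, rather than the $3$ and $5$ appearing in Lemma~\ref{simplepchar} for $p = 3$ and $p = 2$; this reflects the fact that class sizes attain the full $p$-part $|S|_p$ far more easily than character degrees do, so that once $v_p(|\Out(S)|) < v_p(|S|)$ is established nothing further is needed.
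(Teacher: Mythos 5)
Your reduction to the inequality $2\,v_p(|C|) > v_p(|S|) + v_p(|\Out(S)|)$ and your treatment of the alternating, defining-characteristic and sporadic cases are all reasonable, and in spirit your plan follows the same skeleton as the paper: produce a conjugacy class of $p$-defect zero (i.e.\ $|S|_p$ dividing $|C|$) and then compare with $v_p(|\Out(S)|)$ exactly as in the proof of Gagola's Lemma 1.2, which is the paper's closing citation. The difference is where the defect-zero class comes from. The paper does not redo any Lie-type analysis: it invokes the existence of $p$-blocks of defect zero for simple groups of Lie type (any $p$; Michler, Willems) and for alternating groups with $p \geq 5$ (Granville--Ono), takes a defect class of such a block, and is then left only with the alternating groups at $p=2,3$, which it settles with a single $n$-cycle ($n$ odd) or $(n-1)$-cycle ($n$ even) --- simpler than your distinct-parts partitions. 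Your route, by contrast, proposes to re-derive the needed classes directly from the classification.

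The genuine gap is precisely the step you yourself flag as the main obstacle: the uniform existence, in cross characteristic, of a maximal torus of order prime to $p$ (equivalently, of a semisimple class with $p'$-centralizer), together with an exact determination of the exceptional small groups. This is not routine bookkeeping; it is essentially the content of the Michler--Willems analysis, and your proposal asserts it rather than proves it or cites it. There is also a secondary point to watch: in the simple group $S = G/Z$ the centralizer of the image of a regular semisimple element need not be exactly the image of the torus (it can grow by a factor dividing the order $d$ of the diagonal-automorphism group), and in cross characteristic $p$ may divide $d$, so "centralizer is a $p'$-group" needs an extra argument or absorption into the exceptional list in borderline cases. As written, the proposal is a sound plan whose hardest case is deferred; replacing that case by the citation of defect-zero blocks (as the paper does) closes it immediately.
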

\begin{proof}
For the simple group of Lie type and any prime $p$ or the alternating group and $p \geq 5$, there is a $p$-block of defect $0$. Hence there is a conjugacy class $\cl_G(x)$ of $p$-defect $0$, thus $|G|_p$ divides $|\cl_G(x)|$. And the result follows by the proof of ~\cite[Lemma 1.2]{Gagola}.

Thus one only needs to consider the alternating groups and $p=2,3$. 

Set $\alpha = (12 \cdots n)$ if $n$ is odd, we get that $\alpha \in A_n$ and $|\cl_{S_n}(\alpha)| = (n-1)!$. Thus $|\cl_{A_n}(\alpha)|$ is a multiple of $\frac 1 2  (n-1)!$ and the result is clear.

Set $\alpha = (12 \cdots n-1)$ if $n$ is even, we get that $\alpha \in A_n$ and $|\cl_{S_n}(\alpha)| =  \frac {n!} {n-1}$. Thus $|\cl_{A_n}(\alpha)|$ is a multiple of $\frac 1 2 \cdot \frac {n!} {n-1}$ and the result is clear.
\end{proof}

\begin{hyp} \label{hypothesis}
Let $p$ be a prime and let $N = W_1 \times \cdots \times W_s$ be a normal subgroup of a finite group $G$ with the following assumptions: $\bC_G(N) = 1$; every $W_i$, $1 \leq i \leq s$, is a nonabelian simple group of order divisible by $p$.
\end{hyp}

\begin{lemma}\label{qianinductionchar}
Let  $G$, $N$, $p$ be as in Hypothesis ~\ref{hypothesis}. If there exists $\phi_i \in \Irr(W_i)$ such that $v_p(|\Aut(W_i)|/ \phi_i(1)^k) < 0$ for every $i=1,\cdots,s$, then there exist $\phi \in \Irr(N)$ such that $v_p(|G|/\phi(1)^k) < 0$.
\end{lemma}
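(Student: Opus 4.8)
The plan is to leverage the product structure $N = W_1 \times \cdots \times W_s$ together with the action of $G$ permuting the $W_i$, reducing everything to the orbits of $G$ on the set of simple factors. First I would group the factors into $G$-orbits: write $N = L_1 \times \cdots \times L_r$ where each $L_j$ is a product of $G$-conjugate simple factors, and $G$ transitively permutes the factors within each $L_j$. Since $\bC_G(N) = 1$, it is enough to produce, for each $j$, a character $\psi_j \in \Irr(L_j)$ with $v_p(|\bN_G(L_j)/\bC_G(L_j)|/\psi_j(1)^k) < 0$ (actually with the stabilizer of $L_j$ in place of the normalizer, but these coincide on $L_j$), and then set $\phi = \psi_1 \times \cdots \times \psi_r$; the valuations add, and $|G|$ divides $|G/\bC_G(N)|$ times a factor supported on the $\prod_j \bigl(\bN_G(L_j)/\bC_G(L_j)\bigr)$, so a product of the individual negative-valuation inequalities gives the conclusion. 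This reduces the lemma to the case where $G$ permutes the $W_i$ transitively.

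So assume $G$ acts transitively on $\{W_1, \ldots, W_s\}$, all $W_i \cong W$. Let $H = \bN_G(W_1) = \Stab_G(W_1)$, so $|G| = s \cdot |H|$, and let $K$ be the image of $H$ in $\Aut(W_1)$; note $K \le \Aut(W)$ and $|H| = |\bC_G(W_1)| \cdot |K|$, while $\bC_G(W_1) \supseteq W_2 \times \cdots \times W_s$. The hypothesis gives $\phi_1 \in \Irr(W_1)$ with $v_p(|\Aut(W_1)|/\phi_1(1)^k) < 0$, hence a fortiori $v_p(|K|/\phi_1(1)^k) < 0$. I would then take $\phi = \phi_1 \otimes \cdots \otimes \phi_s \in \Irr(N)$, where $\phi_i$ is the image of $\phi_1$ under a chosen transversal element carrying $W_1$ to $W_i$, so that $\phi(1) = \phi_1(1)^s$ and $\phi$ is $G$-invariant (or at least its $G$-orbit is small). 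Now compute: $|G| = s \cdot |\bC_G(W_1)| \cdot |K|$, and $\bC_G(W_1) = (W_2 \times \cdots \times W_s) \times D$ for some $D$ with $\bC_D(W_1)$-type behavior; the point is that the $p$-part of $|\bC_G(W_1)|$ beyond the obvious $|W|^{s-1}$ factor needs to be controlled. Comparing $|G|_p$ with $\phi(1)^k_p = \phi_1(1)^{sk}_p$: we have $s$ copies of the inequality $v_p(|K|/\phi_1(1)^k)<0$ available if we can also absorb the factor $|W|^{s-1}_p$ coming from $W_2 \times \cdots \times W_s \le \bC_G(W_1)$. Since $|W|_p$ divides $\phi_1(1)^k_p \cdot (\text{something})$... here one must be careful, and the right move is to note $|\Aut(W)|_p = |W|_p \cdot |\Out(W)|_p$ and that $v_p(|\Aut(W)|/\phi_1(1)^k) < 0$ already accounts for the full $|W|_p$; so for each factor $W_i$ with $i \ge 2$ the contribution $|W_i|_p$ to $|\bC_G(W_1)|_p$ is dominated by $\phi_i(1)^k_p$. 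Assembling: $v_p(|G|/\phi(1)^k) = v_p(s) + \sum_{i=2}^s v_p(|W_i|) + v_p(|K|) - k\sum_{i=1}^s v_p(\phi_i(1)) \le v_p(s) + \sum_{i=1}^s \bigl(v_p(|\Aut(W)|) - k\, v_p(\phi_1(1))\bigr) - v_p(|\Out(W)|\cdot|\Out(W)|^{s-1}\cdots)$, and the $s$ strictly negative summands together with controlling $v_p(s)$ (using $p \mid |W|$ so each $v_p(|W|) \ge 1$, comfortably beating $v_p(s) \le \log_p s$) yield $v_p(|G|/\phi(1)^k) < 0$.

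The main obstacle I anticipate is the bookkeeping of $|G|_p$ versus $\phi(1)^k_p$ in the transitive case: specifically, making sure the permutation part $v_p(s)$ and the part of $\bN_G(W_1)/\bC_G(W_1)$ not already accounted for do not overwhelm the $s$-fold strict inequality. The clean way around this is to prove the transitive case by induction on $s$ (or on $|G|$), peeling off one block or one orbit at a time exactly as in the proof of Proposition~\ref{prop3} and Proposition~\ref{prop4}, so that at each step one only compares a single $v_p(|\Aut(W)|/\phi_1(1)^k) < 0$ against the new permutation contribution, which is at most $v_p(s) \le \log_p s < v_p(|W|)^{s}$ — trivially absorbed since $p \mid |W|$. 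A secondary subtlety is that $\phi$ should be chosen $G$-invariant (choosing the $\phi_i$ compatibly along a transversal) so that no extra orbit-size factor enters; this is exactly the device used for the $\theta_i$ being "conjugate by the action of $\bar G/\bar K$" in Proposition~\ref{prop3}, and I would reuse it verbatim.
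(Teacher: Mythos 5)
There is a genuine gap, and it sits exactly where you flagged the "main obstacle": the transitive case. First, your accounting identity is wrong as stated. From $|G|=s\,|H|$ with $H=\Stab_G(W_1)$ and $|H|=|K|\,|\bC_G(W_1)|$, the term $|\bC_G(W_1)|$ is \emph{not} $|W|^{s-1}$ (and $\bC_G(W_1)=(W_2\times\cdots\times W_s)\times D$ need not hold): $\bC_G(W_1)$ also contains outer-automorphism contributions on $W_2,\ldots,W_s$ and, crucially, a permutation part on the remaining $s-1$ factors, none of which appear in your displayed formula. Second, your proposed repair — induct by peeling off one factor, claiming the new permutation contribution is "at most $v_p(s)\le\log_p s$, trivially absorbed since $p\mid |W|$" — does not work. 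Accumulated over the induction, the permutation contribution is the $p$-part of the image of $G$ in $\Sym(s)$, which can be as large as $v_p(s!)\approx s/(p-1)$ (for $p=2$ it can equal $s-1$), not $v_p(s)$; and it is not absorbed by $p\mid |W|$, because $|W_i|_p$ already sits inside $|\Aut(W_i)|_p$ in the hypothesis $v_p(|\Aut(W_i)|/\phi_i(1)^k)<0$, so the only guaranteed slack per factor is $1$, with no extra room proportional to $v_p(|W|)$. Your argument never produces a bound that beats $v_p(s!)$.

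The correct counting (this is the content of Qian's Lemma 2.6, which the paper simply cites) is global, not factor-by-factor with a budget of one: since $\bC_G(N)=1$, $G$ embeds in $\Aut(N)\cong\prod_j\bigl(\Aut(W)\wr\Sym(s_j)\bigr)$ (product over isomorphism classes of the $W_i$, $\sum_j s_j=s$). Taking $\phi=\phi_1\times\cdots\times\phi_s$, one gets
\[
v_p\!\left(\frac{|G|}{\phi(1)^k}\right)\;\le\;\sum_{i=1}^{s} v_p\!\left(\frac{|\Aut(W_i)|}{\phi_i(1)^k}\right)\;+\;\sum_j v_p(s_j!)\;\le\;-s+(s-1)<0,
\]
because each summand $v_p(|\Aut(W_i)|/\phi_i(1)^k)$ is an integer and hence $\le -1$, while $\sum_j v_p(s_j!)\le \sum_j\frac{s_j-1}{p-1}\le s-1$. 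It is this comparison of the total deficit $-s$ against $v_p(s!)\le s-1$ — absent from your proposal — that makes the lemma true; no $G$-invariance of $\phi$ is needed (only its degree matters), so that part of your construction is a harmless detour.
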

\begin{proof}
The proof is the same as ~\cite[Lemma 2.6]{QIAN}.
\end{proof}

\begin{lemma}\label{qianinductionconj}
Let  $G$, $N$, $p$ be as in Hypothesis ~\ref{hypothesis}. If there exists $C_i \in \cl(W_i)$ such that $v_p(|\Aut(W_i)|/ |C_i|^k) < 0$ for every $i=1,\cdots,s$, then there exist $C \in \cl(N)$ such that $v_p(|G|/|C|^k) < 0$.
\end{lemma}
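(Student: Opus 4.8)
The plan is to mirror the proof of Lemma~\ref{qianinductionchar} (that is, \cite[Lemma 2.6]{QIAN}), trading irreducible characters of the simple factors for their conjugacy classes and character degrees for class sizes. The only input about the configuration in Hypothesis~\ref{hypothesis} that we will use is the structure of $\Aut(N)$: since $\bC_G(N)=1$ and $Z(N)=1$, conjugation yields embeddings $N\cong\Inn(N)\nor G\hookrightarrow\Aut(N)$ and hence $G/N\hookrightarrow\Out(N)$. Collecting the factors $W_1,\dots,W_s$ into isomorphism types, with $m_t$ factors of type $t$ (so $\sum_t m_t=s$), we have $\Aut(N)\cong\prod_t\bigl(\Aut(W^{(t)})\wr\Sym(m_t)\bigr)$, and therefore $\Out(N)\cong\prod_t\bigl(\Out(W^{(t)})\wr\Sym(m_t)\bigr)$.

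First I would write down the candidate class. Since $N$ is a direct product, $\cl(N)=\cl(W_1)\times\cdots\times\cl(W_s)$; take $C=C_1\times\cdots\times C_s$ with $C_i\in\cl(W_i)$ the class supplied by hypothesis. Then $|C|=\prod_{i=1}^s|C_i|$ and $v_p(|C|^k)=k\sum_{i=1}^s v_p(|C_i|)$. Note that, in contrast with a $G$-orbit argument, no information about how $G$ acts on $\cl(N)$ is needed: a crude bound on $|G|$ will do.

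Next comes the counting estimate. From $|G|=|N|\,|G/N|$, the divisibility $|G/N|\mid|\Out(N)|$, and $v_p(|W_i|)=v_p(|\Inn(W_i)|)$, one obtains
\[
v_p(|G|)\le\sum_{i=1}^s v_p(|W_i|)+\sum_t\Bigl(m_t\,v_p(|\Out(W^{(t)})|)+v_p(m_t!)\Bigr)=\sum_{i=1}^s v_p(|\Aut(W_i)|)+\sum_t v_p(m_t!).
\]
The hypothesis $v_p(|\Aut(W_i)|/|C_i|^k)<0$ gives $v_p(|\Aut(W_i)|)\le k\,v_p(|C_i|)-1$ for each $i$, while Legendre's formula gives $v_p(m!)=(m-\sigma_p(m))/(p-1)\le m-1$ for every $m\ge 1$, where $\sigma_p$ is the base-$p$ digit sum. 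Substituting,
\[
v_p(|G|)\le\sum_{i=1}^s\bigl(k\,v_p(|C_i|)-1\bigr)+\sum_t(m_t-1)=k\sum_{i=1}^s v_p(|C_i|)-\#\{\text{isomorphism types}\}<k\sum_{i=1}^s v_p(|C_i|)=v_p(|C|^k),
\]
so $v_p(|G|/|C|^k)<0$, as wanted.

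The only point needing care is bookkeeping the permutation part of $\Out(N)$: every $p$-contribution of $G/N$ beyond what already sits in $\prod_i\Aut(W_i)$ lives inside the symmetric groups $\Sym(m_t)$, and this is absorbed exactly by $v_p(m_t!)\le m_t-1$, with one unit of slack per isomorphism type left to force the final inequality to be strict. The argument uses no $p$-solvability and nothing about the $W_i$ beyond Hypothesis~\ref{hypothesis}; all the arithmetic content is packaged in the inputs $v_p(|\Aut(W_i)|/|C_i|^k)<0$, which Lemma~\ref{simplepconjugacy} supplies (with $k=2$).
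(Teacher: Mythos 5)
Your proposal is correct and is in substance the argument the paper intends: the paper's proof of this lemma is just the citation ``same as \cite[Lemma 2.6]{QIAN}'', and that argument is exactly your counting --- take the product class $C_1\times\cdots\times C_s$, use $\bC_G(N)=1$ to embed $G$ into $\Aut(N)\cong\prod_t\bigl(\Aut(W^{(t)})\wr\Sym(m_t)\bigr)$, and offset the symmetric-group contribution $v_p(m_t!)\le m_t-1$ against the unit of slack that each strict integer inequality $v_p(|\Aut(W_i)|)<k\,v_p(|C_i|)$ provides. I see no gap.
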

\begin{proof}
The proof is the same as ~\cite[Lemma 2.6]{QIAN}.
\end{proof}

\begin{theorem}\label{chardegreegeneralbound}
Let $G$ be a finite group and $p$ be a prime. Suppose that $p^{a+1}$ does not divide $\chi(1)$ for all $\chi \in \Irr(G)$ and let $P \in \Syl_p(G)$. 
\begin{enumerate}
\item If $p \geq 5$, then $|G: \bF(G)|_p\leq p^{7.5a}$, $b(P)\leq p^{8.5a}$ and $\dl(P) \leq 5 + \log_2 a + \log_2 8.5$.
\item If $p=3$, then $|G: \bF(G)|_p\leq p^{23a}$, $b(P)\leq p^{24a}$ and $\dl(P) \leq 5 + \log_2 a + \log_2 24$.
\item If $p=2$, then $|G: \bF(G)|_p\leq p^{20a}$, $b(P)\leq p^{20a}$ and $\dl(P) \leq 5 + \log_2 a + \log_2 20$.
\end{enumerate}
\end{theorem}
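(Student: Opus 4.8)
The plan is to reduce Theorem~\ref{chardegreegeneralbound} to the $p$-solvable estimates already in hand — Theorem~\ref{chardegreepsolvablebound} for $p\ge5$, Theorem~\ref{chardegreeboundnew3} for $p=3$, and Theorem~\ref{correction} for $p=2$ (where a $2$-solvable group is solvable by the Odd Order Theorem) — combined with the simple-group input of Lemmas~\ref{simplepchar} and~\ref{qianinductionchar}. Let $R$ be the largest normal $p$-solvable subgroup of $G$; this exists since the class of $p$-solvable groups is closed under normal subgroups, quotients and extensions. As $\bF(G)$ is nilpotent it is $p$-solvable, so $\bF(G)\le R$; since $\bF(R)$ is characteristic in $R$ it is a normal nilpotent subgroup of $G$, and comparing it with $\bF(G)$ gives $\bF(R)=\bF(G)$. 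Hence $|G:\bF(G)|_p=|G:R|_p\cdot|R:\bF(R)|_p$, and it suffices to bound the two factors separately.

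For $|R:\bF(R)|_p$: if $\chi\in\Irr(R)$ and $\psi\in\Irr(G)$ lies over $\chi$, then $\chi(1)\mid\psi(1)$, so $p^{a+1}\nmid\chi(1)$; thus the defect hypothesis passes to $R$. Since $R$ is $p$-solvable, Theorem~\ref{chardegreepsolvablebound} gives $|R:\bF(R)|_p\le p^{5.5a}$ when $p\ge5$ and Theorem~\ref{chardegreeboundnew3} gives $|R:\bF(R)|_p\le p^{20a}$ when $p=3$; when $p=2$, $R$ is solvable, and Theorem~\ref{correction} supplies a product of at most $15$ distinct irreducible characters of $R$ whose degree is divisible by $|R:\bF(R)|$, whence $|R:\bF(R)|_2\le 2^{15a}$.

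For $|G:R|_p$, put $\bar G=G/R$. By maximality of $R$ (the preimage of any normal $p$-solvable subgroup of $\bar G$ is $p$-solvable, hence contained in $R$), $\bar G$ has no nontrivial normal $p$-solvable subgroup; in particular $\bF(\bar G)=1$, so $N:=F^*(\bar G)$ is a direct product $W_1\times\cdots\times W_s$ of nonabelian simple groups with $\bC_{\bar G}(N)=1$, and every $W_i$ has order divisible by $p$ — otherwise the product of the $\bar G$-conjugates of $W_i$ would be a nontrivial normal $p'$-subgroup of $\bar G$, hence a nontrivial normal $p$-solvable subgroup. Thus $(\bar G,N,p)$ satisfies Hypothesis~\ref{hypothesis}. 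For each $i$, Lemma~\ref{simplepchar} gives $\phi_i\in\Irr(W_i)$ with $v_p(|\Aut(W_i)|/\phi_i(1)^k)<0$, where $k=2$ if $p\ge5$, $k=3$ if $p=3$, $k=5$ if $p=2$, so Lemma~\ref{qianinductionchar} produces $\phi\in\Irr(N)$ with $v_p(|\bar G|/\phi(1)^k)<0$, i.e.\ $v_p(|\bar G|)<k\,v_p(\phi(1))$. Choosing $\gamma\in\Irr(\bar G)$ over $\phi$ and inflating it to $G$ yields $\psi\in\Irr(G)$ with $\phi(1)\mid\psi(1)$, so $v_p(\phi(1))\le a$ and therefore $|G:R|_p=|\bar G|_p<p^{ka}$. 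Combining the two factors gives $|G:\bF(G)|_p<p^{(k+c_p)a}$ with $(k,c_p)=(2,5.5)$, $(3,20)$, $(5,15)$ according as $p\ge5$, $p=3$, $p=2$, i.e.\ the bounds $p^{7.5a}$, $p^{23a}$, $p^{20a}$. The bounds on $b(P)$ then follow as in the proof of Theorem~\ref{chardegreepsolvablebound} from $b(P)\le|P:O_p(G)|\cdot b(O_p(G))=|G:\bF(G)|_p\cdot b(O_p(G))$ and $b(O_p(G))\le p^a$, and the bounds on $\dl(P)$ follow from those via Isaacs' theorem on abelian subgroups of bounded index, Podoski's theorem and Huppert's estimate, exactly as there.

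The one step carrying genuine content, rather than bookkeeping, is the structural passage in the third paragraph: one must factor out the \emph{$p$-solvable} radical — not the smaller solvable radical — so that \emph{every} simple direct factor of $F^*(\bar G)$ has order divisible by $p$, which is precisely what makes Hypothesis~\ref{hypothesis} and Lemma~\ref{qianinductionchar} applicable and lets the simple constituents control $|G:R|_p$. After that the argument is an addition of exponents, and the real difficulty is already packaged in the imported facts about simple groups: the estimates $v_p(|\Aut(S)|/\chi(1)^k)<0$ of Lemma~\ref{simplepchar} and their promotion to direct products in Lemma~\ref{qianinductionchar}.
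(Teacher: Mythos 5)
Your proposal is correct and follows essentially the same route as the paper: factor out the maximal normal $p$-solvable subgroup $T$, bound $|T:\bF(T)|_p$ by Theorem~\ref{chardegreepsolvablebound}, Theorem~\ref{chardegreeboundnew3}, or Theorem~\ref{correction} (using Feit--Thompson for $p=2$), and bound $|G/T|_p$ via Hypothesis~\ref{hypothesis} together with Lemmas~\ref{simplepchar} and~\ref{qianinductionchar} applied to $F^*(G/T)$. Your added justifications (why $\bF(T)=\bF(G)$, why every simple factor of $F^*(G/T)$ has order divisible by $p$, and why $v_p(\phi(1))\le a$) are details the paper leaves implicit, but the argument is the same.
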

\begin{proof}
Let $T$ be the maximal normal $p$-solvable subgroup of $G$. Since $\bF(G) \subseteq T$, $\bF(T)=\bF(G)$. Since $T \nor G$, $p^{a+1}$ does not divide $\lambda(1)$ for all $\lambda \in \Irr(T)$.

If $p \geq 5$, then $|T: \bF(G)|_p \leq p^{5.5a}$ by Theorem ~\ref{chardegreepsolvablebound}.

If $p = 3$, then $|T: \bF(G)|_p \leq p^{20a}$ by Theorem ~\ref{chardegreeboundnew3}.

If $p =2$, then $|T: \bF(G)|_p \leq p^{15a}$ by Theorem ~\ref{correction}.

We now consider $\bar G=G/T$, we know that $F^*(\bar G)$ is a direct product of the non-abelian simple groups, where $p$ divides the order of each of them.

Since $\bar G$ and $F^*(\bar G)$ satisfy Hypothesis ~\ref{hypothesis}, by Lemma ~\ref{qianinductionchar} and Lemma ~\ref{simplepchar}, we have that

$|\bar G|_p \leq p^{2a}$ if $p \geq 5$.

$|\bar G|_p \leq p^{3a}$ if $p=3$.

$|\bar G|_p \leq p^{5a}$ if $p=2$.

Thus, we have,
\begin{enumerate}
\item $|G:\bF(G)|_p \leq |G:T|_p |T: \bF(G)|_p  \leq p^{7.5a}$ if $p \geq 5$.
\item $|G:\bF(G)|_p \leq |G:T|_p |T: \bF(G)|_p  \leq p^{23 a}$ if $p=3$.
\item $|G:\bF(G)|_p \leq |G:T|_p |T: \bF(G)|_p  \leq p^{20 a}$ if $p=2$.
\end{enumerate}

The bounds for $b(P)$ and $\dl(P)$ follow from the same arguments as in Theorem ~\ref{chardegreepsolvablebound}.
\end{proof}

\begin{theorem}\label{conjugacygeneralbound}
Let $G$ be a finite group where $p$ is a prime. Suppose that $p^{a+1}$ does not divide $|C|$ for all $C \in \cl(G)$ and let $P \in \Syl_p(G)$. 
\begin{enumerate}
\item If $p \geq 5$, then $|G: \bF(G)|_p\leq p^{7.5a}$, $b^*(P)\leq p^{8.5a}$ and $|P'| \leq p^{8.5a(8.5a+1)/2}$.
\item If $p=3$, then $|G: \bF(G)|_p\leq p^{22a}$, $b^*(P)\leq p^{23a}$ and $|P'| \leq p^{23a(23a+1)/2}$.
\item If $p=2$, then $|G: \bF(G)|_p\leq p^{17a}$, $b^*(P)\leq p^{18a}$ and $|P'| \leq p^{18a(18a+1)/2}$.
\end{enumerate}
\end{theorem}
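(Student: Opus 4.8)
The plan is to mirror exactly the structure of the proof of Theorem~\ref{chardegreegeneralbound}, but running the conjugacy-class machinery in place of the character-degree machinery at every step. Concretely: let $T$ be the maximal normal $p$-solvable subgroup of $G$, so that $\bF(T)=\bF(G)$, and since $T \nor G$ the hypothesis that $p^{a+1} \nmid |C|$ for all $C \in \cl(G)$ passes down to $T$ (because $|T:\bC_T(x)|$ divides $|G:\bC_G(x)|$ for $x\in T$). Then I would apply the conjugacy-class bounds already proved for $p$-solvable groups: Theorem~\ref{conjugacyboundpsolvable} gives $|T:\bF(G)|_p \leq p^{5.5a}$ when $p\geq 5$, Theorem~\ref{conjugacybound3} gives $|T:\bF(G)|_p\leq p^{20a}$ when $p=3$, and Theorem~\ref{correctionconj} gives $|T:\bF(G)|_p\leq p^{15a}$ when $p=2$.

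Next I would handle the quotient $\bar G = G/T$. As in Theorem~\ref{chardegreegeneralbound}, $F^*(\bar G)$ is a direct product of nonabelian simple groups each of order divisible by $p$ (otherwise such a factor together with its normalizer modulo centralizer would be a larger normal $p$-solvable subgroup), and $\bC_{\bar G}(F^*(\bar G))=1$, so $\bar G$ and $F^*(\bar G)$ satisfy Hypothesis~\ref{hypothesis}. Now I invoke Lemma~\ref{simplepconjugacy} to find, for each simple factor $W_i$, a conjugacy class $C_i\in\cl(W_i)$ with $v_p(|\Aut(W_i)|/|C_i|^2)<0$; feeding these into Lemma~\ref{qianinductionconj} (with $k=2$) produces $C\in\cl(F^*(\bar G))$ with $v_p(|\bar G|/|C|^2)<0$. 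Since the $p$-part of $|C|$ is at most $p^a$ (as $C$, suitably interpreted, gives rise to a class of $G/T$... — more carefully, $|C|$ divides the size of a conjugacy class of $\bar G$, and the hypothesis on $G$ forces every class size of $\bar G$ to have $p$-part at most $p^a$ since $F^*(\bar G)\nor \bar G$ and class sizes of the normal subgroup lift), we get $|\bar G|_p = |\bar G:F^*(\bar G)|_p \leq p^{2a}$ for $p\geq 5$, and likewise $p^{2a}$ for $p=3$ and $p^{2a}$ for $p=2$ — note this is \emph{better} than the character-degree case because Lemma~\ref{simplepconjugacy} works with the exponent $2$ uniformly in $p$, which is precisely why the conjugacy bounds $22a$ (for $p=3$) and $17a$ (for $p=2$) come out smaller than their character counterparts $23a$ and $20a$.

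Combining, $|G:\bF(G)|_p \leq |G:T|_p\,|T:\bF(G)|_p$, yielding $p^{7.5a}$, $p^{22a}$, $p^{17a}$ in the three cases. For $b^*(P)$: writing $N=O_p(G)$, since $|N:\bC_N(x)|$ divides $|G:\bC_G(x)|$ we get for $x\in P$ that $|\cl_P(x)| = |P:\bC_P(x)| \leq |P:N|\,|N:\bC_N(x)| \leq |G:\bF(G)|_p \cdot p^a$, giving $p^{8.5a}$, $p^{23a}$, $p^{18a}$ respectively. Finally, the bound on $|P'|$ follows from Vaughan-Lee's theorem \cite[Theorem VIII.9.12]{Huppert2} exactly as in Theorem~\ref{conjugacyboundpsolvable}: if $b^*(P)=p^s$ then $|P'|\leq p^{s(s+1)/2}$.

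The one genuinely delicate point is the middle paragraph's claim that the $p$-part of the relevant class size in $F^*(\bar G)$ is bounded by $p^a$: one must check that a conjugacy class of a normal subgroup produces a class of the whole group whose size it divides, so that the global hypothesis $p^{a+1}\nmid|C|$ for $C\in\cl(G)$ — which descends to $\cl(G/T)$ since $T\nor G$ — actually constrains $v_p$ of the class of $F^*(\bar G)$ we constructed; this is the analogue of the sentence "it is clear that we may find $\tilde\gamma\in\Irr(\tilde G)$ such that $|\bar G:N|_p$ divides $\tilde\gamma(1)$" in the character proofs, and is where I would be most careful to get the exponents exactly right. Everything else is a routine transcription of the character-theoretic argument into its conjugacy-class mirror.
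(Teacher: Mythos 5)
Your proposal is correct and follows essentially the same route as the paper: reduce to the maximal normal $p$-solvable subgroup $T$ via Theorems~\ref{conjugacyboundpsolvable}, \ref{conjugacybound3} and \ref{correctionconj}, bound $|G/T|_p$ by $p^{2a}$ using Hypothesis~\ref{hypothesis} together with Lemmas~\ref{simplepconjugacy} and \ref{qianinductionconj}, and then deduce the $b^*(P)$ and $|P'|$ bounds exactly as in Theorem~\ref{conjugacyboundpsolvable} (via Vaughan-Lee). Your careful treatment of the ``delicate point'' --- that class sizes of a normal subgroup divide class sizes of the ambient group, so the hypothesis descends through $G \to G/T \to F^*(G/T)$ --- is precisely what the paper leaves implicit, and your observation that the uniform exponent $2$ in Lemma~\ref{simplepconjugacy} explains the constants $22a$ and $17a$ is accurate.
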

\begin{proof}
Let $T$ be the maximal normal $p$-solvable subgroup of $G$. Since $\bF(G) \subseteq T$, $\bF(T)=\bF(G)$. Since $T \nor G$, $p^{a+1}$ does not divide $|C|$ for all $C \in \cl(T)$.

If $p \geq 5$, then $|T: \bF(G)|_p \leq p^{5.5a}$ by Theorem ~\ref{conjugacyboundpsolvable}.

If $p=3$, then $|T: \bF(G)|_p \leq p^{20a}$ by Theorem ~\ref{conjugacybound3}.

If $p=2$, then $|T: \bF(G)|_p \leq p^{15a}$ by Theorem ~\ref{correctionconj}.

We now consider $\bar G=G/T$, we know that $F^*(\bar G)$ is a direct product of the non-abelian simple groups, where $p$ divides the order of each of them.

Since $\bar G$ and $F^*(\bar G)$ satisfy Hypothesis ~\ref{hypothesis}, by Lemma ~\ref{qianinductionconj} and Lemma ~\ref{simplepconjugacy}, we have that $|\bar G|_p \leq p^{2a}$.

Thus, we have,
\begin{enumerate}
\item $|G:\bF(G)|_p \leq |G:T|_p |T: \bF(G)|_p  \leq p^{7.5a}$ if $p \geq 5$.
\item $|G:\bF(G)|_p \leq |G:T|_p |T: \bF(G)|_p  \leq p^{22 a}$ if $p=3$.
\item $|G:\bF(G)|_p \leq |G:T|_p |T: \bF(G)|_p  \leq p^{17 a}$ if $p=2$.
\end{enumerate}

The bounds for $b^*(P)$ and $|P'|$ follow from the same arguments as in Theorem ~\ref{conjugacyboundpsolvable}.
\end{proof}

\section{Blocks and defect groups} \label{sec:Blocks}
Now we are ready to prove Theorem B, which we restate.\\

\begin{thmB} \label{sdefect}
Let $p$ be a prime, and $G$ be a finite group such that $O_p(G)=1$. We denote the $p$ part of the group order $|G|_p=p^n$. Then $G$ contains a $p$-block $B$ such that $d(B) \leq \lfloor \alpha n \rfloor $ where,
\begin{enumerate}
\item $\alpha = \frac {6.5} {7.5}$ if $p \geq 5$.
\item $\alpha = \frac {22} {23}$ if $p=3$.
\item $\alpha = \frac {19} {20}$ if $p=2$.
\end{enumerate}
\end{thmB}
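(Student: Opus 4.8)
The plan is to reduce Theorem~B to the character-degree bound already proved in Theorem~\ref{chardegreegeneralbound}. The key observation is the standard translation between $p$-blocks of defect $d$ and irreducible characters: if $\chi\in\Irr(G)$ has $p$-part $\chi(1)_p = p^{n-e}$ where $|G|_p = p^n$, then $\chi$ lies in a block $B$ with defect $d(B) \le e$, since the defect of the block containing $\chi$ satisfies $d(B) = n - \nu_p(\chi(1)) + \nu_p(|B|\text{-stuff})$; more precisely, $\chi(1)_p$ divides $p^{n-d(B)}$, so $d(B) \le n - \log_p \chi(1)_p$. Therefore, to produce a block of small defect it suffices to produce an irreducible character of large $p$-part. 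So the strategy is: let $e_p(G) = a$ be the largest integer with $p^a \mid \chi(1)$ for some $\chi\in\Irr(G)$, find a block $B$ with $d(B) \le n - a$, and then bound $n - a$ from above by bounding $n$ in terms of $a$.

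The second ingredient is exactly Theorem~\ref{chardegreegeneralbound}, which controls $n$ in terms of $a$. Since $O_p(G) = 1$ we have $\bF(G) = O_{p'}(G)\times O_p(G) \cdot(\text{higher stuff})$; more carefully, $O_p(\bF(G)) = O_p(G) = 1$, so $\bF(G)$ is a $p'$-group and hence $|G|_p = |G:\bF(G)|_p$. Thus $p^n = |G:\bF(G)|_p$, and Theorem~\ref{chardegreegeneralbound} gives $n = \log_p|G:\bF(G)|_p \le c\cdot a$ where $c = 7.5$ if $p\ge 5$, $c = 23$ if $p = 3$, and $c = 20$ if $p = 2$. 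Combining with the block above, $d(B) \le n - a \le ca - a = (c-1)a$. Now write $a = n/c$ in the worst case — really one wants $d(B)\le n-a$ and $a \ge n/c$ is not automatic, but rather: since $n \le ca$ we get $a \ge n/c$, hence $d(B)\le n-a \le n - n/c = \frac{c-1}{c}n$. This gives $\alpha = 6.5/7.5$ for $p\ge5$, $\alpha = 22/23$ for $p=3$, and $\alpha = 19/20$ for $p=2$, matching the statement. Taking floors is harmless since $d(B)$ is an integer.

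Concretely the steps are: (i) recall that $\chi(1)_p \mid p^{n-d}$ whenever $\chi$ lies in a block of defect $d$, so picking $\chi$ with $\chi(1)_p = p^a$ yields a block $B$ with $d(B) \le n-a$; (ii) use $O_p(G)=1$ to get $\bF(G)$ is a $p'$-group, hence $|G|_p = |G:\bF(G)|_p = p^n$; (iii) apply Theorem~\ref{chardegreegeneralbound} with this $a = e_p(G)$ to obtain $n \le ca$ with the appropriate constant $c$; (iv) conclude $d(B) \le n - a \le \frac{c-1}{c}n$ and take the floor. I would also remark that $a\ge 1$ is needed for the argument to be non-vacuous, but if $e_p(G) = 0$ then by Ito--Michler $G$ has a normal abelian Sylow $p$-subgroup, forcing $O_p(G)\ne 1$ unless $n = 0$, so the hypothesis $O_p(G)=1$ already handles that degenerate case.

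The main obstacle, such as it is, is getting the block-theoretic inequality $d(B)\le n - \log_p\chi(1)_p$ precisely right and citing it correctly — this is classical (it is essentially the definition of defect together with the fact that $\chi(1)_p\,|\,|G|_p/p^{d(B)}$, see e.g.\ \cite{WF} or Navarro's book) but one must be careful that we want an upper bound on the defect of \emph{the} block containing a specified high-degree character, not merely the existence of some block of defect $0$. Everything else is bookkeeping: the real content has already been extracted into Theorem~\ref{chardegreegeneralbound}, whose proof via the orbit theorem (Propositions~\ref{prop3} and~\ref{prop4}) is the technical heart of the paper.
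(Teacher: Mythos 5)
Your overall route is the same as the paper's: use $O_p(G)=1$ to see that $\bF(G)$ is a $p'$-group, so that $p^{n}=|G|_p=|G:\bF(G)|_p\le p^{\beta a}$ with $a=e_p(G)$ by Theorem~\ref{chardegreegeneralbound}, and then convert one character whose degree has large $p$-part into a block of small defect. The gap is in the conversion, exactly the step you describe as classical: the divisibility you invoke, $\chi(1)_p\mid p^{\,n-d(B)}$ for the block $B$ containing $\chi$, is backwards. Brauer's theorem says that $p^{\,n-d(B)}$ divides $\chi(1)$ for \emph{every} $\chi\in\Irr(B)$, i.e.\ $d(B)=n-\min_{\psi\in\Irr(B)}\nu_p(\psi(1))$; hence a character with $\nu_p(\chi(1))=t$ gives only the lower bound $d(B)\ge n-t$, and the upper bound $d(B)\le n-t$ that you need holds precisely when $\chi$ has height zero in its block, which is not automatic. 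Concretely, the degree $2$ character of $S_4$ (with $p=2$, $n=3$, $t=1$) lies in the principal $2$-block, which has defect $3$, not $\le 2$; and in $\SL(2,3)$ with $p=2$ there is a character of degree $2$ but no $2$-block of defect $\le 2$ at all, since every defect group contains $O_2(G)=Q_8$. So the implication ``there is $\chi$ with $p^{t}\mid\chi(1)$, hence there is a block of defect at most $n-t$'' is not a formal consequence of block theory; if it holds in the present situation, the hypothesis $O_p(G)=1$ (or special properties of the particular character constructed) must enter the block-theoretic step itself, not just the bound on $n$.

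In fairness, the paper's own proof of Theorem B asserts the identical statement (``$p^{n-d}$ is the largest $p$-part for all irreducible characters in $\Irr(G)\cap B$,'' where ``largest'' should be ``smallest''), so your write-up reproduces the paper's argument rather than repairing it; the missing ingredient is the same in both. To close it one would have to show that the character of maximal $p$-part obtained via Theorem~\ref{chardegreegeneralbound} has height zero in its block, or argue with blocks directly — for instance via Fong's theorems for $p$-solvable groups as in \cite{AENA2} and \cite{YY5}, together with a separate treatment of the top layer $G/T$ — rather than deduce the existence of a block of small defect from a single character degree. The remaining bookkeeping in your proposal (the reduction $|G|_p=|G:\bF(G)|_p$, the constants $\frac{\beta-1}{\beta}$, and the degenerate case $e_p(G)=0$) is fine.
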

\begin{proof}
By the proof of Theorem ~\ref{chardegreegeneralbound}, there exists a $\chi \in \Irr(G)$ such that $(\chi(1)_p)^{\beta} \geq |G|_p$, i.e. $\chi(1)_p \geq p^{\frac n {\beta}}$.

where
\begin{enumerate}
\item $\beta = 7.5$ if $p \geq 5$.
\item $\beta = 23$ if $p \geq 3$.
\item $\beta = 20$ if $p \geq 2$.
\end{enumerate}

If $|G|_p=p^n$ and $B$ be a $p$-block with $d(B)=d$, then $p^{n-d}$ is the largest $p$-part for all irreducible characters in $\Irr(G)\cap B$.

If $G$ has an irreducible character $\chi$ of degree divisible by $p^t$, then $d(B)\leq n-t$ where $B$ is a $p$-block which $\chi$ lies in.

Thus we know there is a  $p$-block such that $B$ such that $d(B) \leq n-\frac n {\beta} \leq \frac {(\beta-1)n}{\beta}$.
\end{proof}

Remark: In order to improve the bounds of the results in this paper, one might need to study the situation when a $p$-solvable group acts on a field of characteristic does not equal to $p$, and hope that a similar result as ~\cite[Theorem 3.3]{YY5} holds. Also, a strengthened version of Lemma ~\ref{simplecoprime} would also be helpful in improving the bounds.

\section{Acknowledgement} \label{sec:Acknowledgement}
The first author would like to thank for the financial support from the AMS-Simons travel grant. The second author is partially supported by a grant from the NSF of China (11471054).



\end{document}